\newcommand{\sF}{\mathcal F}
\newcommand{\sI}{\mathcal I}
\newcommand{\sL}{\mathcal L}
\newcommand{\sN}{\mathcal N}
\newcommand{\sP}{\mathcal P}
\newcommand{\sS}{\mathcal S}
\newcommand{\sT}{\mathcal T}
\newcommand{\R}{\mathbb R}
\newcommand{\E}{\mathbb E}
\newcommand{\F}{\mathbb F}
\newcommand{\Q}{\mathbb Q}
\newcommand{\T}{\mathbb T}
\newcommand{\Prob}{\mathbb P}
\newcommand{\argmax}{\mbox{argmax}}
\newtheorem{thm}{Theorem}[section]
\newtheorem{prop}[thm]{Proposition}
\newtheorem{eg}[thm]{Example}
\newtheorem{lem}[thm]{Lemma}
\newtheorem{cor}[thm]{Corollary}
\newtheorem{rem}[thm]{Remark}
\newtheorem{sass}{Standing Assumption}
\begin{document}

\title{The shape of the value function under Poisson optimal stopping}
\author{David Hobson\thanks{Department of Statistics, University of Warwick, Coventry CV4 7AL, UK. d.hobson@warwick.ac.uk} }

\date{\today}

\maketitle

\begin{abstract}
In a classical problem for the stopping of a diffusion process $(X_t)_{t \geq 0}$, where the goal is to maximise the expected discounted value of a function of the stopped process $\E^x[e^{-\beta \tau}g(X_\tau)]$, maximisation takes place over all stopping times $\tau$. In a Poisson optimal stopping problem, stopping is restricted to event times of an independent Poisson process. In this article we consider whether the resulting value function $V_\theta(x) = \sup_{\tau \in \sT(\T^\theta)}\E^x[e^{-\beta \tau}g(X_\tau)]$ (where the supremum is taken over stopping times taking values in the event times of an inhomogeneous Poisson process with rate $\theta = (\theta(X_t))_{t \geq 0}$) inherits monotonicity and convexity properties from $g$. It turns out that monotonicity (respectively convexity) of $V_\theta$ in $x$ depends on the monotonicity (respectively convexity) of the quantity $\frac{\theta(x) g(x)}{\theta(x) + \beta}$ rather than $g$. Our main technique is stochastic coupling.

{\bf Keywords:} Poisson optimal stopping, diffusion process, monotonicity and convexity, coupling, time-change.

{\bf MSC:} 60G40, 90B50.

\end{abstract}

\section{Introduction} \label{sec:intro}
In a classical optimal stopping problem the objective is to maximise the expected discounted payoff, where the payoff is a function of some underlying process, typically a time-homogeneous diffusion, and the maximisation takes places over {\em all} stopping times. In a Poisson optimal stopping problem (Dupuis and Wang~\cite{DupuisWang:02}, Lempa~\cite{Lempa:12}, Lange et al~\cite{LangeRalphStore:19} --- the terminology was introduced by~\cite{LangeRalphStore:19}) the set of potential stopping times is restricted to be the set of event times of an independent Poisson process .  The idea behind introducing the Poisson optimal stopping problem is that in many applications (for example, the optimal time to sell a financial asset) there are restrictions on when stopping can occur (for example, liquidity restrictions may mean that buyers are not always available). If the underlying process to be stopped is Markovian, then it is very convenient (and also often realistic) to model the set of candidate opportunities to stop as the event times of a (not-necessarily homogeneous) Poisson process, as this will preserve the Markov property.
In this article we want to consider the properties of the solution to the Poisson optimal stopping problem, where we allow the rate of the Poisson process to depend on the underlying diffusion. Rather than studying a specific problem, we study a general class of problems, and look for general features of the value function.

Let $X$ be a diffusion process, $g$ a non-negative payoff function and $\beta$ an impatience factor. The classical optimal stopping problem is to find
\begin{equation}
w(x) = \sup_{\tau \in \sT([0,\infty))} \E^x [ e^{-\beta \tau} g(X_\tau)],
\label{eq:wdef}
\end{equation}
where $\sT( \T )$ is the set of all stopping times taking values in $\T$, and in this case $\T = [0,\infty)$. The Poisson optimal stopping problem, introduced by Dupuis and Wang~\cite{DupuisWang:02} in the case where $X$ is exponential Brownian motion and extended to general diffusion processes by Lempa~\cite{Lempa:12}, is to find
\begin{equation}
V_\lambda(x) = \sup_{\tau \in \sT(\T^\lambda)} \E^x[e^{-\beta \tau} g(X_\tau)]
\label{eq:hdef}
\end{equation}
where $\T^\lambda$ is the set of event times of a Poisson process with rate $\lambda$.

The Poisson optimal stopping problem has been extended in many ways and to many settings, for example to allow for regime switching (Liang and Wei~\cite{LiangWei:16}), non-exponential inter-arrival times (Menaldi and Robin~\cite{MenaldiRobin:16}) and running costs and multi-dimensions (Lange et al~\cite{LangeRalphStore:19}). A related work in which actions are constrained to occur only at event times of a Poisson process is Rogers and Zane~\cite{RogersZane:98} who model portfolio optimisation.

Hobson and Zeng~\cite{HobsonZeng:19} consider an extension of \eqref{eq:hdef} in which the agent can choose the rate of the Poisson process (dynamically) subject to a cost which depends on the chosen rate. Motivated by this example, in this paper we consider the extension of \eqref{eq:hdef} to a state-dependent, inhomogeneous Poisson process and the problem of finding
\begin{equation}
V_\theta(x) = \sup_{\tau \in \sT(\T^\theta)} \E^x[e^{-\beta \tau} g(X_\tau)]
\label{eq:Vdef}
\end{equation}
where $\T^\theta$ is the set of event times of a time-inhomogeneous Poisson process with rate $\theta(X_t)$ at time $t$. (We will use the symbol $\lambda$ in the case of a constant-rate Poisson process, and $\theta$ in the case of a state-dependent Poisson process, but essentially the only purpose of a different notation is to allow us to highlight the results in the constant rate case.) 

One approach to solving \eqref{eq:Vdef} (and also \eqref{eq:hdef}) is to use the Bellman-type representation
\begin{equation}
V_\theta(x) = \E^x[e^{-\beta T^\theta_1} \max \{ g(X_{T^\theta_1}), V_\theta(X_{T^\theta_1} \}]
\label{eq:Vdef2}
\end{equation}
where $T^\theta_1$ is the first event time of the Poisson process with rate $\theta = \{ \theta(X_t) \}_{t \geq 0}$. This representation is based on the fact that at the first event time of the Poisson process the agent chooses between stopping and continuing. Solving \eqref{eq:Vdef2}, even numerically, may be challenging as the unknown $V_\theta$ appears on both sides. One strategy, as described in Lange et al~\cite{LangeRalphStore:19} is as follows. Let $V_\theta^{(n)}$ denote the value function under the restriction that stopping is constrained to lie in the first $n$ events of the Poisson process. If we set $V^{(0)}_\theta=0$ then the family $(V^{(n)}_\theta)_{n \geq 1}$ solves
\begin{equation}
V^{(n)}_\theta(x) = \E^x[e^{-\beta T^\theta_1} \max \{ g(X_{T^\theta_1}), V^{(n-1)}_\theta(X_{T^\theta_1}) \}].
\label{eq:Vdefn}
\end{equation}
Since $V^{(1)}_\theta \geq 0 = V^{(0)}_\theta$ it is easy to see that $V^{(n)}_\theta$ is increasing in $n$ (this is also clear from the definition) and therefore $V_\theta^{(\infty)}$ defined by $V_\theta^{(\infty)}(x) = \lim_{n \uparrow \infty} V_\theta^{(n)}(x)$ exists. Moreover, since we expect that $V_\theta^{(\infty)} = V_\theta$ we have found our solution.

In this article we are concerned with the monotonicity and convexity in $x$ of $V_\theta(x)$. A secondary goal is to understand the relationship between $V_\theta^{(\infty)}$ and $V_\theta$. We give a simple sufficient condition for equality, but also an example to show that they are not always equal.

Temporarily, instead of an optimal stopping problem, consider a fixed-horizon problem: $U(x)= \E^x[ e^{- \beta \kappa}g(X_\kappa)]$ where $\kappa$ is a constant time. Suppose $g$ is increasing: a simple Doeblin coupling argument (see Lindvall~\cite[p24]{Lindvall:92}, Bergmann et al~\cite{BergmanGrundyWiener:96}, Henderson et al~\cite{HendersonSunWhalley:14}) gives that $U$ is also increasing. Further, if $X$ is exponential Brownian motion and $g$ is convex then $w$ is convex (Cox and Ross~\cite{CoxRoss:76}). Subject to the condition that $X$ is a martingale, this convexity result has been extended to general time-homogeneous diffusions by El Karoui et al~\cite{ElKarouiJeanblancShreve:98} using stochastic flows, Bergman et al~\cite{BergmanGrundyWiener:96} using pdes and Hobson~\cite{Hobson:98} using coupling.

Now return to the classical optimal stopping problem \eqref{eq:wdef}. Again, a simple coupling argument gives that if $g$ is increasing then so is $w$. Merton~\cite[Theorem 10]{Merton:73} shows that if $g$ is convex and $X$ is exponential Brownian motion then $w$ is convex. Hobson~\cite{Hobson:98}, see also Ekstr{\o}m~\cite{Ekstrom:04}, gives a coupling argument to show that if $X$ is a martingale diffusion and $g$ is convex then $w$ is convex. If we look for results which apply simultaneously across all diffusions then this is the best we can hope for (see Example~\ref{eg:nonconvex} below) although in the non-martingale case Alvarez~\cite{Alvarez:03} gives sufficient conditions for convexity which combine the payoff and the minimal decreasing $\beta$-excessive function of a given diffusion.

The first goal of this paper is to consider similar issues for $V_\theta$. If $g$ is increasing in $x$, does $V_\theta$ inherit this monotonicity property? If $g$ is convex, does $V_\theta$ inherit convexity? We give an example to show that monotonicity of $g$ is not sufficient for monotonicity of $V_\theta$, and convexity of $g$ is not sufficient for convexity of $V_\theta$, even when $X$ is a martingale diffusion.

Our first results are that if $g$ and $\theta$ are both increasing, then $V_\theta$ is increasing, and if $g$ is convex (and $X$ is a martingale) then $V_\lambda$ is convex. We give simple coupling proofs of these statements.
Our main result is more refined, and includes the above results as special cases: subject to regularity conditions, if $\theta$ and $\frac{g \theta}{\beta + \theta}$ are increasing then $V_\theta$ is increasing, and if $\frac{g \theta}{\beta + \theta}$ is convex (and $X$ is a martingale) then $V_\theta$ is convex.
Again, our proofs depend on coupling arguments. Our main technique is to show that there is a time-change $\Lambda= (\Lambda_s)_{s \geq 0}$ such that if $Y=(Y_s)_{s \geq 0}$ is given by $Y_s=X_{\Lambda_s}$ then
\begin{equation}
\label{eq:Gapparent} \E^x \left[ e^{-\beta T^\theta_1} g(X_{T^\theta_1}) \right] =
\E^x \left[ \frac{g(Y_T)\theta(Y_T)}{\beta + \theta(Y_T)} \right]
\end{equation}
where $T$ is an independent unit-rate exponential random variable.
We use this representation to show that if $\Psi:= \frac{\theta g}{\beta + \theta}$ has monotonicity (respectively convexity) properties in $x$ then so does $G_\theta(x) := \E^x \left[ e^{-\beta T^\theta_1} g(X_{T^\theta_1}) \right]$ (for convexity in $x$ we need that $X$ is a martingale). Then we deduce corresponding properties for $V^{(\infty)}_\theta$. The key role of the shape of $\Psi$ is apparent from \eqref{eq:Gapparent}.


The second goal of the paper is to consider the relationship between $V_\theta$ and $V^{(\infty)}_\theta$. Clearly $V^{(\infty)}_\theta \leq V_\theta$. We show by example that the equality may be strict. However, subject to a growth condition on $g$ and the condition that the time of the $n^{th}$ event of the Poison process increases to infinity, there is equality and $V_\theta^{(n)}$ approaches $V_\theta^{(\infty)}=V_\theta$.

The paper is structured as follows. The next section contains some simple, stylized examples, or rather counterexamples, which show in part that the questions we consider are interesting. Section~\ref{sec:formulation} gives a precise formulation of the problem, gives some first results, and explains how to change the problem for a general one-dimensional diffusion to a problem involving a diffusion in natural scale.
Section~\ref{sec:monotonicity} discusses the monotonicity and convexity of $V^{(\infty)}_\theta$. Finally,
Section~\ref{sec:Vinfty} compares $V^{(\infty)}_\theta$ to $V_\theta$ and gives conditions such that $V_\theta^{(\infty)}=V_\theta$, and hence deduces monotonicity and convexity results for $V_\theta$.

\section{Examples and counterexamples}
\label{sec:eg}

\begin{eg}\label{eg:w>V}
We might expect $\lim_{\lambda \uparrow \infty} V_\lambda(x)= w(x)$, but this is not always the case.

Let $X$ be Brownian motion on $\R$ and let $g(x)= I_{ \{ x \in \Q \} }$. Then $w(x)=1 > V_\lambda(x)=0$.

We conclude that we expect to need some conditions on $g$ in order to get reasonable results. 
\end{eg}

\begin{eg}\label{eg:nonconvex}
Let $X$ be Brownian motion with positive unit drift on $[0,\infty)$, absorbed at zero. Let $H_z$ denote the first hitting time by $X$ of $z$. Let $g(x)=x$ and let $y = \argmax \{ \frac{ze^z}{\sinh (z\sqrt{1 + 2 \beta})} \}$. If $X_0 = x$ and $dX_t = dB_t + dt$ then for $0 < x \leq y$,
\begin{equation}
\label{eq:sinh}
 w(x) = \E^x[e^{- \beta (H_0 \wedge H_y)} X_{H_0 \wedge H_y}] = y \frac{e^{(y-x)} \sinh(x \sqrt{1 + 2 \beta})}{\sinh (y \sqrt{1 + 2 \beta})}
\end{equation}
with $w(x)=x$ for $x \geq y$ (see Borodin and Salminen~\cite[3.0.5(b)]{BorodinSalminen:02} for the second equality in \eqref{eq:sinh}). It follows that $w$ is neither convex nor concave.

We conclude that unless $X$ is a martingale there is no reason to expect that convex $g$ leads to convex $w$, and {\em a fortiori} that convex $g$ leads to convex $V_\lambda$ or $V_\theta$.
\end{eg}

For the next example, and for use in other examples later in the article, for $\zeta>0$ let $\alpha^+_\zeta$ (respectively $\alpha^-_\zeta$) be the positive (respectively negative) root of $Q_\zeta(\alpha) = 0$ where
\[ Q_\zeta(\alpha) = \frac{\sigma^2}{2} \alpha(\alpha - 1) + \mu \alpha - \zeta. \]
Note that if $\zeta > \mu$ then $\alpha^+_\zeta > 1$.

\begin{eg}[Dupuis and Wang]\label{eg:DW}
Suppose $X$ is exponential Brownian motion, with drift $\mu<\beta$ and volatility $\sigma>0$. Suppose $g(x)=(x-K)^+$ and consider stopping times which are constrained to lie in the set of events times of a time-homogeneous Poisson process with rate $\lambda$.

Let $L = K ( 1 + \frac{\lambda}{(\beta + \lambda) \alpha^+_\beta  - \beta \alpha^-_{\beta + \lambda} - \lambda})$.
Then the optimal stopping time is $\tau = \inf \{ u \in \T^\lambda : X_u \geq L \}$ and
\[ V_\lambda(x) = \left\{ \begin{array}{ll}
                               (L-K) \left(\frac{x}{L} \right)^{\alpha^+_\beta}  &  0 < x \leq L  \\
                               \frac{\beta}{\beta + \lambda} (L-K) \left( \frac{x}{L} \right)^{\alpha^{-}_{\beta + \lambda}} + \frac{\lambda(x-K)}{\beta + \lambda}  & x > L . \end{array} \right. \]

In this example $V_\lambda(x) > g(x)$ on $(0,L)$ and $V_\lambda(x) < g(x)$ on $(L,\infty)$. Note that as $\lambda \uparrow \infty$, $L \uparrow M = K( \frac{1+ \alpha^+_\beta}{\alpha^+_\beta})$ and $V_\lambda(x) \uparrow w(x)$ where
\begin{equation}
\label{eq:American}
w(x) = \left\{ \begin{array}{ll}
                               (M-K) \left(\frac{x}{M} \right)^{\alpha^+_\beta}  &  0 < x \leq M  \\
                               (x-K)  & x > M.  \end{array} \right.
\end{equation}

For future reference, note that in this canonical example
\[ \E^x\left[ \sup_{s \geq t} e^{-\beta s} g(X_s) \right] \leq \E^x\left[ \sup_{s \geq t} e^{-\beta s} X_s \right] = x \frac{e^{-(\beta-\mu)t } \sigma^2}{2(\beta - \mu)} \stackrel{t \uparrow \infty}{\longrightarrow} 0. \]
\end{eg}

\begin{eg}\label{eg:nonmonotone}
Suppose $g(x)=x$ and suppose $X$ is exponential Brownian motion started at $x>0$, with volatility $\sigma$ and drift $\mu$ with $\mu < \beta$. Then $w(x)=x$
(it is always optimal to stop immediately) and $V_\lambda(x) = \rho x$ where $\rho=\frac{\lambda}{\lambda + \beta - \mu} \in (0,1)$. To see this note that it is always optimal to stop at the first event of the Poisson process and then with $T^\gamma$ denoting an exponential random variable with rate $\gamma$
\[ V_\lambda(x) = \E^x[ X_{T^\lambda} e^{- \beta T^\lambda}] = x \E[ e^{-(\beta - \mu)T^\lambda}] = x \Prob(T^\lambda < T^{\beta - \mu}) = \frac{\lambda}{\lambda + \beta - \mu} x . \]

Now suppose $\theta(x)=\infty$ for $x \leq J$ and $\theta(x)=0$ for $x>J$. Then, for $0 < x \leq J$, $V_\theta(x)=x$. For $x>J$, $V_\theta(x) = \E^x[J e^{-\beta H_J}]$. In particular, $V_\theta(x) = J( \frac{x}{J} )^{\alpha^-_\beta}$.

We conclude that monotonicity of $g$ is not sufficient for monotonicity of $V_\theta$, and that even in the martingale case $\mu = 0$, convexity of $g$ is not sufficient for convexity of $V_\theta$.
\end{eg}

\begin{eg}\label{eg:nonequality}
Suppose $X$ is standard Brownian motion absorbed at zero and started above zero. Suppose $g(x) = I_{ \{x=0 \} }$. Then $w(x) = \E^x[e^{-\beta H_0}] = e^{-\sqrt{2 \beta} x}$ on $[0,\infty)$.

Suppose $\theta(x)=x^{-2}$ on $(0,\infty)$ and $\theta(0)=1$. It can be shown that $V^{(\infty)}_\theta(x)=0$ for $x > 0$ and $V^{(\infty)}_\theta(0) = \frac{1}{1 + \beta}$. However, $V_\theta(x)=\frac{1}{1+\beta} e^{-\sqrt{2 \beta}x}$ for $x > 0$ and $V_\theta(0)= \frac{1}{1+\beta}$ so that $V^{(\infty)}_\theta < V_\theta$ on $(0,\infty)$.

We conclude that the sequence $(V^{(n)}_\theta)_{n \geq 0}$ does not always yield a limit equal to the value function $V_\theta$. In this example there are an infinite number of events of the inhomogeneous Poisson process before $X$ hits $0$ and hence $V^{(\infty)}_\theta(x) = \lim_n V^{(n)}_\theta(x) = 0$ on $(0,\infty)$. However, in calculating $V_\theta$, all these events of the Poisson process can be viewed as suboptimal as candidate stopping times. Instead the optimal stopping time is $\tau = \inf \{t \in \T^\theta: X_t = 0 \}$.
\end{eg}

\section{Problem formualation and first results}
\label{sec:formulation}

\subsection{Problem specification}
Let the stochastic process $X = (X_t)_{t \geq 0}$ be a time-homogeneous, real-valued, regular diffusion process with initial value $X_0=x$, living on a filtered probability space $\sP=(\Omega, \sF, \Prob, \F=(F_t)_{t \geq 0})$ which satisfies the usual conditions. Let $\sI \subseteq \R$ denote the state space of $X$, and suppose that any endpoints which can be reached in finite time are absorbing and are included in $\sI$. (See Section~\ref{ssec:boundary} below for further discussion about the behaviour of $X$ at endpoints of $\sI$.) We will write $\Prob^x$ to denote probabilities under the condition that $X_0=x$ (although later when we have multiple processes on the same probability space, we will also denote this dependence on the initial condition via a superscript on $X$). We suppose that $X$ solves the SDE
\begin{equation}
\label{eq:sdeX}
dX_t = a(X_t) dB_t + b(X_t) dt
\end{equation}
with initial condition $X_0=x \in \sI$, and that $a$ and $b$ are such that the solution to \eqref{eq:sdeX} is unique in law. The results of Engelbert and Schmidt~\cite{EnglebertSchmidt:84}, see Karatzas and Shreve~\cite[Section 5.5]{KaratzasShreve:91}, show that a sufficient condition is that $1/a^2$ and $b/a^2$ are locally integrable.

Let $g: \sI \mapsto \R_+$ be a non-negative (measurable) payoff function and let $\beta$ be a strictly positive discount factor. In principle our results can be extended to the case of state-dependent discount factors, but the focus in this paper is on state-dependent arrival rates for stopping opportunities and we will suppose that the discount factor is constant.

The value function $w$ of the classical discounted optimal stopping problem is defined as
\begin{equation}
\label{eq:wdef2}
w(x) = \sup_{\tau \in \sT([0,\infty))} \E^x [ e^{-\beta \tau} g(X_\tau)]
\end{equation}
where $\sT(\T)$ is the set of all $\T$-valued stopping times.
\begin{sass}
\label{sass:1} The coefficients of the SDE for $X$ are such that $a>0$ and $1/a^2$ and $b/a^2$ are locally integrable, so that $X$ is unique in law.
Further, $g \geq 0$ satisfies suitable growth conditions, so that the problem for $w$ in \eqref{eq:wdef2} is well-posed.
\end{sass}

Now consider a Poisson optimal stopping problem in which stopping can only occur at the event times $ \T^\lambda = \{ T^\lambda_n \}_{n \geq 1}$ of an independent Poisson process of rate $\lambda$. (We assume that the probability space is rich enough to carry a Poisson process which is independent of $X$, and to carry any other random variables which we wish to define.) The value function is now given by
\begin{equation}
\label{eq:hdef2}
 V_\lambda(x) = \sup_{\tau \in \sT(\T^\lambda)} \E^x [ e^{-\beta \tau} g(X_\tau)]
\end{equation}
where $\T^\lambda$ is the set of event times of a Poisson process rate $\lambda$.
We expect that as $\lambda$ increases then $\lim_{\lambda \uparrow \infty} V_\lambda(x) = w(x)$, at least if $g$ is lower semi-continuous. As we saw in Example~\ref{eg:w>V}, in general equality in the limit may fail.

Let $H_\lambda$ be the value of the Poisson optimal stopping problem, conditional on there being an event of the Poisson process at time 0. Then we have
\begin{equation}
\label{eq:Hdef}
H_\lambda(x) = \sup_{\tau \in \sT(\T^\lambda \cup \{ 0 \} )} \E^x [ e^{-\beta \tau} g(X_\tau)] = \max \{ g(x), V_\lambda (x) \} .
\end{equation}
Further, by conditioning on the first event time of the Poisson process we have the representation
$V_\lambda(x) = \E^x \left[ \int_0^\infty dt \; \lambda e^{-\lambda t} e^{- \beta t} H_\lambda(X_t) \right] $.
Substituting \eqref{eq:Hdef} into 
this last equality gives an expression for $V_\lambda$ in feedback form:
\begin{equation}
\label{eq:hdeffeedback}
 V_\lambda(x) = \E^x \left[ \int_0^\infty dt \; \lambda e^{-\lambda t} e^{- \beta t} \max \{ g(X_t), V_\lambda (X_t) \} \right] .
\end{equation}
Based on this identity we expect that $V_\lambda$ will solve the ode
\[ \sL V - (\beta + \lambda) V + \lambda(g \vee V) = 0 \]
where $\sL$ is the generator of $X$. Dupuis and Wang~\cite{DupuisWang:02} discus the solution of \eqref{eq:hdef2} and write down expressions for $V_\lambda$ and the continuation region in the case where $X$ is exponential Brownian motion and $g$ is a call payoff, see Example~\ref{eg:DW}. Lempa~\cite{Lempa:12} extends these results to general diffusions.

Let $\theta : \sI \mapsto [0,\infty)$ be a measurable function such that, to avoid trivialities, $\int_{\sI} \theta(x) dx > 0$. We consider $\theta$ to be the stochastic rate function of a state-dependent Poisson process $N^\theta = (N^\theta_t)_{t \geq 0}$ so that, conditional on the path of the diffusion $X$, the probability that there are no events of the Poisson process in an interval $[s,t)$ is $\exp( - \int_{[s,t)} \theta(X_u) du)$. Let $\T^\theta$ denote the event times of this Poisson process and let $\sT(\T^\theta)$ be the set of stopping times constrained to take values in the event times of $N^\theta$.

Let $T^\theta_1$ be the first event time. We can write $\{ T^\theta_1, T^\theta_2, \ldots T^\theta_n \}$ for the first $n$ events, but note that there may be countably infinitely many events in finite time. As a result, we cannot always write the set of event times as $\{ T^\theta_n \}_{n \geq 1}$, at least not if we insist on $T^\theta_i < T^\theta_j$ for $i<j$.

We wish to consider the properties of
\[ G_\theta(x) = \E^x[e^{- \beta T^\theta_1} g(X_{T^\theta_1})] \]
and especially
\[ V_\theta(x) = \sup_{\tau \in \sT(\T^\theta)} \E^x[e^{- \beta \tau} g(X_{\tau})] . \]
Where the arrival rate of the Poisson process is constant and equal to $\lambda$ we write $G_\lambda$ instead of $G_\theta$.

\subsection{First results}
\label{ssec:firstresults}
In this section we give some simple proofs of monotonicity and convexity of $G_\theta$ and $V_\theta$ which can be obtained by extending proofs of monotonicity and convexity for $w$ from the literature
(see \cite{BergmanGrundyWiener:96,CoxRoss:76,ElKarouiJeanblancShreve:98,Ekstrom:04,HendersonSunWhalley:14,Hobson:98,Lindvall:92,Merton:73}). In Section~\ref{sec:monotonicity} we will give stronger results using a different coupling which is specific to the Poisson optimal stopping problem.

Under Standing Assumption~\ref{sass:1} the diffusion $X$ is unique in law, and the optimal stopping problem corresponding to $w$ is well-posed. Then $V_\theta$ is finite.
\begin{thm}
\label{thm:simpleincreasing}
Suppose $g$ and $\theta$ are increasing in $x$. Then $V_\theta$ is increasing in $x$.
\end{thm}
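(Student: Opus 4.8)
The plan is to argue directly by a monotone coupling that combines a synchronous coupling of the diffusion with a superposition coupling of the state-dependent Poisson processes. Fix $x \le x'$; the goal is to show $V_\theta(x) \le V_\theta(x')$. First I would realise both started diffusions on a common probability space: by the standard monotone coupling for one-dimensional diffusions (driving $X^x$ and $X^{x'}$ by the same Brownian motion, or equivalently gluing the two copies at their first meeting time and invoking uniqueness in law), one obtains versions with $X^x_t \le X^{x'}_t$ for all $t \ge 0$, almost surely. This is precisely the reduction to natural scale alluded to in Section~\ref{sec:formulation}, where ordered starting points remain ordered.

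Second, I would couple the two Poisson processes so that the event set of the lower process is contained in that of the upper process. Since $\theta$ is increasing and $X^x_t \le X^{x'}_t$ pathwise, we have $\theta(X^x_t) \le \theta(X^{x'}_t)$ for all $t$. I construct $N^x$ with conditional intensity $\theta(X^x_t)$ and, conditionally independently given the paths, an auxiliary point process with intensity $\theta(X^{x'}_t) - \theta(X^x_t) \ge 0$; their superposition $N^{x'}$ then has the correct intensity $\theta(X^{x'}_t)$, and crucially the event sets satisfy $\T^x \subseteq \T^{x'}$. Both hypotheses enter here: monotonicity of $\theta$ is exactly what makes the superposition legitimate, and monotonicity of $g$ will be used in the payoff comparison.

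Third comes the mimicking step. Given any $\varepsilon$-optimal admissible stopping time $\tau$ for the $x$-problem, which takes values in $\T^x$, set $\tau' = \tau$; since $\tau \in \T^x \subseteq \T^{x'}$, this is a candidate for the $x'$-problem. The payoffs compare event-by-event: the discount factors agree, $e^{-\beta \tau'} = e^{-\beta \tau}$, because we stop at the same instant, while $g(X^{x'}_{\tau}) \ge g(X^x_{\tau})$ because $g$ is increasing and $X^{x'}_\tau \ge X^x_\tau$. Hence $\E^{x'}[e^{-\beta \tau'} g(X^{x'}_{\tau'})] \ge \E^x[e^{-\beta \tau} g(X^x_\tau)]$, and taking the supremum over $\tau$ yields $V_\theta(x) \le V_\theta(x')$.

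The main obstacle is the admissibility of $\tau'$: as constructed it is measurable with respect to the enlarged filtration $\mathcal G$ generated by the driving noise of the coupling (which determines $X^x$, $N^x$, and hence $\tau$), rather than the natural filtration of $(X^{x'},N^{x'})$ alone. The point to verify is that this enlargement does not raise the value of the $x'$-problem. This holds because $(X^{x'},N^{x'})$ remains Markov with respect to $\mathcal G$ --- each $\mathcal G_t$ records only the noise up to time $t$ and is independent of the future increments that drive $X^{x'}$, so the immersion (``hypothesis (H)'') property holds and the supremum over $\mathcal G$-stopping times valued in $\T^{x'}$ coincides with $V_\theta(x')$. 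Establishing this invariance carefully, and checking that the coupling survives absorption at the endpoints of $\sI$, is where the genuine work lies; the comparison of payoffs itself is immediate.
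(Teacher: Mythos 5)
Your proposal is correct and follows essentially the same route as the paper: a Doeblin-type monotone coupling of the two diffusions, followed by a coupling of the state-dependent Poisson processes under which the event set of the lower process is contained in that of the upper one, so that every admissible stopping time for the $x$-problem is admissible for the $x'$-problem and pays at least as much there by monotonicity of $g$. Your superposition construction is just another realization of the paper's space-time Poisson coupling (the extra points are precisely those $(u,z)$ with $\theta(X^x_u) < z \leq \theta(X^{x'}_u)$), and your closing remarks on the enlarged filtration address a measurability point that the paper passes over silently.
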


\begin{proof}

Suppose $X$ solves
\begin{equation}
\label{eq:Xsde}
dX_t = a(X_t) dB_t + b(X_t) dt
\end{equation}
Fix $x<y$. Let $X^x$ and $X^y$ denote solutions of \eqref{eq:Xsde} where the superscript indicates the initial value e.g. $X^x_0=x$. We construct a coupling such that $X^x \leq X^y$ pathwise.

Let $\bar{X}^x$ solve $d \bar{X}^x_s = a(\bar{X}_s) d \bar{B}^x_s + b(\bar{X}_s) dt$ subject to $\bar{X}^x_0 = x$ and let $\bar{X}^y$ solve $d \bar{X}^y_s = a(\bar{X}^y_s)d \bar{B}^y_s + b(\bar{X}^y_s)ds$ subject to $\bar{X}^y_0 = y$, where the Brownian motions $\bar{B}^x$ and $\bar{B}^y$ are independent. Let $\sigma = \inf \{ u : \bar{X}^x_u = \bar{X}^y_u \}$, let $\tilde{X}^x_s = \bar{X}^x_s$ and let $\tilde{X}^y_s = \bar{X}^y_s$ on $s \leq \sigma$ and $\tilde{X}^y_s = \bar{X}^x_s$ on $s>\sigma$. Then, by the Strong Markov property and uniqueness in law, $\bar{X}^y$ and $\tilde{X}^y$ are identical in law. Moreover, $\tilde{X}^x_s \leq \tilde{X}^y_s$ by construction. This is the Doeblin coupling, Lindvall~\cite[Section II.2]{Lindvall:92}.
It follows that $\E[\psi(\bar{X}^x_s)] = \E[\psi(\tilde{X}^x_s)] \leq \E[\psi(\tilde{X}^y_s)]=\E[\psi(\bar{X}^y_s)]$ for any non-negative, increasing function $\psi$ and any $s$.

Suppose that $\theta$ is constant (in which case we write $\lambda$).
Then, since $g$ is increasing, for the coupled processes $(\tilde{X}^x,\tilde{X}^y)$ and for any $\tau$ we have $e^{-\beta \tau}g(\tilde{X}^x_\tau) < e^{-\beta \tau} g(\tilde{X}^y_\tau)$. Moreover, for $\tau \in \sT(\T^\lambda)$,
\[\E[e^{-\beta \tau}g(\tilde{X}^x_\tau)] \leq \E[e^{-\beta \tau} g(\tilde{X}^y_{\tau})] \leq \sup_{\xi \in \sT(\T^\lambda)} \E[e^{-\beta \xi} g(\tilde{X}^y_{\xi})] = V_\lambda(y). \]
Taking a supremum over $\tau \in \sT(\T^\lambda)$ gives that $V_\lambda(x) \leq V_\lambda(y)$ and hence that $V_\lambda$ is increasing in $x$.

Now we consider the corresponding result for increasing rate functions $\theta$.
By the previous analysis, without loss of generality we may assume that $X^x_s \leq X^y_s$ for all $s \geq 0$.

Let $N^\gamma = (N^{\gamma}_t)_{t \geq 0}$ be a Poisson process with stochastic rate function $\gamma = (\gamma_t)_{t \geq 0}$.

There are two natural ways to think of $N^\gamma=(N^\gamma_t)_{t \geq 0}$ and therefore (at least) two natural ways to couple inhomogeneous Poisson processes with different rates.

First, if $\bar{N}$ is a unit-rate Poisson counting process, then we can define $N^\gamma=(N^\gamma_t)_{t \geq 0}$ by $N^\gamma_t = \bar{N}_{\int_0^t \gamma_s ds}$. Then, given a pair of Poisson processes $N^\gamma$ and $N^\xi$ we can couple them by writing $N^\gamma_t = \bar{N}_{\int_0^t \gamma_s ds}$ and $N^\xi_t = \bar{N}_{\int_0^t \xi_s ds}$. If $\int_0^t \gamma_s ds \geq \int_0^t \xi_s ds$ for all $t$ then $N^\gamma_t \geq N^\xi_t$ for all $t$.

Second, we can consider $N^\gamma$ as the counting process derived from a homogeneous space-time Poisson process $N^{\R^2_+}$ in which there is an event of $N^\gamma$ in $[s,t)$ if and only if there is an event of $N^{\R^2_+}$ in $\{(u,z): s \leq u < t, z \leq \gamma_u \}$. See Figure~\ref{fig:1}. Here, $N^{\R^2_+}$ is a Poisson process in the first quadrant of the plane for which the number of points in a set $A \subseteq \R^2_+$ is a Poisson random variable with mean the area of $A$.

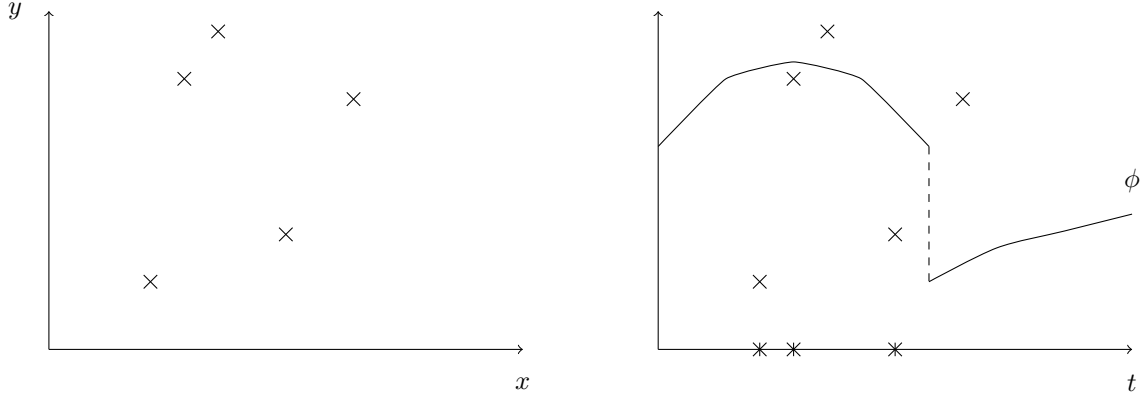
\begin{figure}[H]
\centering
\begin{tikzpicture}[scale=0.9]

 \draw[->] (0,0) -- (0,5) ;
 \draw[->] (0,0) -- (7,0) ;

 \draw[->] (9,0) -- (16,0) ;
 \draw[->] (9,0) -- (9,5) ;

 \draw[-] (1.9,3.9) -- (2.1,4.1) ;
 \draw[-] (2.1,3.9) -- (1.9,4.1) ;
 \draw[-] (10.9,3.9) -- (11.1,4.1) ;
 \draw[-] (11.1,3.9) -- (10.9,4.1) ;
 \draw[-] (10.9,-0.1) -- (11.1,0.1) ;
 \draw[-] (11.1,-0.1) -- (10.9,0.1);
 \draw[-] (11.0,-0.1) -- (11.0,0.1);

 \draw[-] (1.4,0.9) -- (1.6,1.1) ;
 \draw[-] (1.6,0.9) -- (1.4,1.1) ;
 \draw[-] (10.4,0.9) -- (10.6,1.1) ;
 \draw[-] (10.6,0.9) -- (10.4,1.1) ;
 \draw[-] (10.4,-0.1) -- (10.6,0.1) ;
 \draw[-] (10.6,-0.1) -- (10.4,0.1);
 \draw[-] (10.5,-0.1) -- (10.5,0.1);

 \draw[-] (3.4,1.6) -- (3.6,1.8) ;
 \draw[-] (3.6,1.6) -- (3.4,1.8) ;
 \draw[-] (12.4,1.6) -- (12.6,1.8) ;
 \draw[-] (12.6,1.6) -- (12.4,1.8) ;
 \draw[-] (12.4,-0.1) -- (12.6,0.1) ;
 \draw[-] (12.6,-0.1) -- (12.4,0.1);
 \draw[-] (12.5,-0.1) -- (12.5,0.1);

 \draw[-] (2.4,4.6) -- (2.6,4.8) ;
 \draw[-] (2.6,4.6) -- (2.4,4.8) ;
 \draw[-] (11.4,4.6) -- (11.6,4.8) ;
 \draw[-] (11.6,4.6) -- (11.4,4.8) ;

 \draw[-] (4.4,3.6) -- (4.6,3.8) ;
 \draw[-] (4.6,3.6) -- (4.4,3.8) ;
 \draw[-] (13.4,3.6) -- (13.6,3.8) ;
 \draw[-] (13.6,3.6) -- (13.4,3.8) ;

 \draw plot [smooth, tension=0.25] coordinates { (9,3) (10,4) (11,4.25) (12,4) (13,3) } ;
 \draw[dashed] (13,3) -- (13,1) ;
 \draw plot [smooth, tension=0.5] coordinates { (13,1) (14,1.5) (15,1.75) (16,2) };
 \node (n1) at (7,-0.5) {$x$};
 \node (n1) at (-0.5,5) {$y$};
 \node (n1) at (16,-0.5){$t$};
 \node (n1) at (16,2.5) {$\phi$};

\end{tikzpicture}
\caption{The left figure shows events of the unit rate Poisson process on $\R^2_+$. The right figure how those events become events of a time-inhomogeneous Poisson process on $\R_+$ of rate $\phi$: an event at $(x,y)$ becomes an event at $t=x$ if $y \leq \phi(x)$. }
\label{fig:1}
\end{figure}

We take the second approach. 
Since $\theta$ is increasing (and we have coupled $X^x$ and $X^y$ so that $X^x_t \leq X^y_t$ for all $t$) we have a set inclusion of the event times for the Poisson process with rate $\theta(X^x_t)_{t \geq 0}$ within the set event times for the Process with rate $\theta(X^y_t)_{t \geq 0}$:
\[ \T^{(\theta(X^x_t))_{t \geq 0}} = \{ u : (u,z) \in N^{\R_+^2}, z \leq \theta(X^x_u) \} \subseteq \{ u : (u,z) \in N^{\R_+^2}, z \leq \theta(X^y_u) \} = \T^{(\theta(X^y_t))_{t \geq 0}}. \]

In particular, any candidate stopping time for the process started at $x$ is also a candidate stopping time for the process started at $y$. Then
\[ 
\sup_{\tau \in \sT(\T^{(\theta(X^x_t))_{t \geq 0}})} \E[e^{-\beta \tau} g(X^x_\tau)] \leq \sup_{\tau \in \sT(\T^{(\theta(X^x_t))_{t \geq 0}})} \E[e^{-\beta \tau} g(X^y_\tau)] \leq \sup_{\tau \in \sT(\T^{(\theta(X^y_t))_{t \geq 0}})} \E[e^{-\beta \tau} g(X^y_\tau)] 
 \]
where the first inequality comes from $X^x_{\cdot} \leq X^y_{\cdot}$ and the second from the inclusion $\sT(\T^{(\theta(X^x_t))_{t \geq 0}}) \subseteq \sT(\T^{(\theta(X^y_t))_{t \geq 0}})$.
\end{proof}

\begin{thm}
\label{thm:EBM}
Suppose $X$ is exponential Brownian motion. Suppose $g$ is convex. Then $\E^x[e^{-\beta t} g(X_t)]$, $G_\lambda(x) = \E[ e^{-\beta T^\lambda_1} g(X_{T^\lambda_1})]$ and $V_\lambda(x)$ are convex in $x$.
\end{thm}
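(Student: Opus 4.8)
The plan is to exploit the one special feature of exponential Brownian motion that is not shared by general martingale diffusions: its multiplicative dependence on the initial condition. Writing $M=(M_t)_{t\ge0}$ for the solution of the SDE started at $1$, we have $X^x_t = xM_t$ with $M_t>0$ and $M$ independent of the starting point; note that this holds for \emph{any} drift, which is why the theorem does not require $X$ to be a martingale. Fix $x_0,x_1>0$ and $p\in(0,1)$ and set $x_p=px_0+(1-p)x_1$. Driving all three processes by the same $M$ gives the pathwise identity $X^{x_p}_t=pX^{x_0}_t+(1-p)X^{x_1}_t$ for every $t$, whence, since $g$ is convex,
\[ g(X^{x_p}_t) \le p\,g(X^{x_0}_t) + (1-p)\,g(X^{x_1}_t) \qquad \text{pathwise for all } t. \]

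First I would dispose of the two fixed-payoff quantities. Multiplying the displayed inequality by $e^{-\beta t}$ and taking expectations immediately gives convexity of $x\mapsto\E^x[e^{-\beta t}g(X_t)]$ for each fixed $t$. For $G_\lambda$ I would attach an independent exponential event time $T^\lambda_1$ to the common space, apply the pathwise inequality at the random time $T^\lambda_1$, and take expectations; equivalently one writes $G_\lambda(x)=\int_0^\infty\lambda e^{-\lambda t}\E^x[e^{-\beta t}g(X_t)]\,dt$ and integrates the fixed-time convexity against the positive measure $\lambda e^{-\lambda t}\,dt$. Either route shows $G_\lambda$ is convex.

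The substantive case is $V_\lambda$, where I would run the coupling directly on the optimal stopping problem. Place $M$ and a single independent rate-$\lambda$ Poisson process, with event set $\T^\lambda$, on a common space, and drive $X^{x_0},X^{x_1},X^{x_p}$ by this same $M$. Because $x\mapsto xM_t$ is a bijection of $(0,\infty)$ for each $t$, the natural filtrations generated by $X^{x_0}$, $X^{x_1}$, $X^{x_p}$ together with the Poisson process all coincide, and $\T^\lambda$ does not depend on the starting point; hence any $\tau\in\sT(\T^\lambda)$ admissible for the problem started at $x_p$ is simultaneously admissible for the problems started at $x_0$ and $x_1$. Applying the pathwise convexity inequality at $\tau$, taking expectations, and bounding each resulting term by its own value function yields
\[ \E[e^{-\beta\tau}g(X^{x_p}_\tau)] \le p\,\E[e^{-\beta\tau}g(X^{x_0}_\tau)] + (1-p)\,\E[e^{-\beta\tau}g(X^{x_1}_\tau)] \le p\,V_\lambda(x_0) + (1-p)\,V_\lambda(x_1). \]
Taking the supremum over admissible $\tau$ on the left gives $V_\lambda(x_p)\le pV_\lambda(x_0)+(1-p)V_\lambda(x_1)$, the desired convexity.

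I expect the main obstacle to be the admissibility bookkeeping in the $V_\lambda$ step: one must verify carefully that a stopping time chosen for the $x_p$-problem really is a legitimate element of $\sT(\T^\lambda)$ for the $x_0$- and $x_1$-problems. This rests on the identification of the three natural filtrations, which in turn relies on the strict positivity of the starting points and the invertibility of $x\mapsto xM_t$; degenerate cases (a starting point at $0$, or $M_t$ vanishing) must be excluded, but for exponential Brownian motion $M_t>0$ almost surely and the state space is $(0,\infty)$, so no difficulty arises. An alternative I would keep in reserve is to prove convexity of each $V^{(n)}_\lambda$ by induction from \eqref{eq:Vdefn}, using that $\max\{g,V^{(n-1)}_\lambda\}$ is convex and reapplying the $G_\lambda$ argument, then passing to the limit $V^{(\infty)}_\lambda$; this would additionally require the identification $V_\lambda=V^{(\infty)}_\lambda$ established only later, so the direct coupling above is preferable here.
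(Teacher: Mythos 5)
Your proposal is correct and follows essentially the same route as the paper: the multiplicative coupling $X^x_t = xZ_t$ with $Z$ independent of the starting point, the pathwise convexity inequality applied at a fixed time, at $T^\lambda_1$, and at an arbitrary $\tau \in \sT(\T^\lambda)$, and then the two-sided supremum argument for $V_\lambda$. Your additional care over the admissibility of $\tau$ across the three problems (identification of the filtrations via invertibility of $x \mapsto xM_t$) is a point the paper passes over silently, but it does not change the argument.
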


\begin{proof}
This result extends a result of Merton~\cite[Theorem 10]{Merton:73} from convexity of $w$ in $x$ to convexity of $V_\lambda$.

Suppose $dX_t = \sigma X_t dB_t + \mu X_t dt$. Then there is a coupling such that $X^x$ has representation
$X^x_t = x Z_t$ where $Z_t= e^{\sigma B_t + (\mu - \frac{1}{2} \sigma^2)t}$ is independent of $x$. Then for $x<y$ and $\zeta \in (0,1)$,
\begin{equation}
g(X^{\zeta x + (1-\zeta) y}_t)  =  g( \zeta x Z_t + (1 - \zeta)yZ_t) \leq \zeta g(xZ_t) + (1 - \zeta)g(yZ_t) = \zeta g(X^x_t) + (1 - \zeta)g(X^y_t).
\end{equation}
It follows that for any stopping time $\tau$ we have $g(X^{\zeta x + (1-\zeta) y}_\tau) \leq \zeta g(X^x_\tau) + (1 - \zeta)g(X^y_\tau)$ and then
\begin{equation}
\label{eq:thEBM}
 \E[e^{-\beta \tau} g(X^{\zeta x + (1-\zeta) y}_\tau)] \leq \zeta \E[e^{-\beta \tau}g(X^x_\tau)] + (1 - \zeta)\E[e^{-\beta \tau}g(X^y_\tau)].
\end{equation}
Taking $\tau = T^\lambda_1$ we get that $G_\lambda$ is convex. Moreover, taking a pair of supremums over $\tau \in \sT(\T^\lambda)$ on the right-hand-side of \eqref{eq:thEBM},
\[  \E[e^{-\beta \tau} g(X^{\zeta x + (1-\zeta) y}_\tau)] \leq \zeta V_\lambda(x) + (1 - \zeta)V_\lambda(y). \]
Now, taking a supremum over $\tau \in \sT(\T^\lambda)$ on the left-hand-side we obtain $V_\lambda(\zeta x + (1 - \zeta)y) \leq \zeta V_\lambda(x) + (1 - \zeta)V_\lambda(y)$.
\end{proof}
\begin{rem}
A similar proof applies to the case where $X$ is Brownian motion with drift and we deduce that if $X^x_t = x + aB_t + bt$ and $g$ is convex then $G_\lambda(x)$ and $V_\lambda(x)$ are convex in $x$.
\end{rem}



\begin{thm}
\label{thm:hobson}
Suppose $X$ is a martingale diffusion. Suppose $g$ is convex. Then $\E^x[ e^{- \beta t} g(X_t)]$ and $G_\lambda(x)$ are convex in $x$.
\end{thm}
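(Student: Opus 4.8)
The plan is to reduce everything to the fixed-horizon quantity and then integrate. Note first that $\E^x[e^{-\beta t}g(X_t)] = e^{-\beta t}\,u_t(x)$ where $u_t(x):=\E^x[g(X_t)]$, and since the factor $e^{-\beta t}$ does not depend on $x$, the first assertion is equivalent to convexity of $u_t$ in $x$ for each fixed $t$. This is exactly the European-option convexity result for martingale diffusions established by El~Karoui et al~\cite{ElKarouiJeanblancShreve:98}, Hobson~\cite{Hobson:98} and Ekstr{\o}m~\cite{Ekstrom:04}; I would quote it, but in keeping with the paper's philosophy of self-contained coupling proofs I would also recall the argument, which is the substance of the theorem.

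For the coupling proof of convexity of $u_t$, I would fix $x_1<x_3$, set $x_2=\zeta x_1+(1-\zeta)x_3$ with $\zeta\in(0,1)$, and build three copies $X^{x_1}\le X^{x_2}\le X^{x_3}$ on one probability space driven by a common Brownian motion. Because a martingale diffusion is in natural scale ($b\equiv 0$), the comparison/uniqueness argument of Theorem~\ref{thm:simpleincreasing} keeps the three paths ordered and non-crossing, and each copy is itself a martingale. Convexity of $g$ then gives the pathwise bound $g(X^{x_2}_t)\le \pi_t\,g(X^{x_1}_t)+(1-\pi_t)\,g(X^{x_3}_t)$, where $\pi_t=(X^{x_3}_t-X^{x_2}_t)/(X^{x_3}_t-X^{x_1}_t)$ records the relative position of the middle path, with $\pi_0=\zeta$. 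The hard part — and the step I expect to be the main obstacle — is precisely that $\pi_t$ is random and correlated with the two payoffs, so a naive expectation does not return the desired weights $(\zeta,1-\zeta)$. This is exactly where the martingale property must be used: one refines the coupling (following Hobson~\cite{Hobson:98}, Ekstr{\o}m~\cite{Ekstrom:04}) so that the middle path is eventually absorbed into one of the two envelopes, with absorption into the upper (respectively lower) envelope occurring with probability $1-\zeta$ (respectively $\zeta$), by optional sampling applied to the bounded martingale copies. Effectively $g(X^{x_2}_t)$ is then dominated by a fair mixture of $g(X^{x_1}_t)$ and $g(X^{x_3}_t)$ with the correct averages, and taking expectations yields $u_t(x_2)\le\zeta u_t(x_1)+(1-\zeta)u_t(x_3)$. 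Everything preceding this is the routine order-preserving Doeblin/synchronous coupling already used for monotonicity.

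Finally, to pass to $G_\lambda$ I would use that $T^\lambda_1$ is exponential with rate $\lambda$ and independent of $X$, so that
\[ G_\lambda(x)=\int_0^\infty \lambda e^{-(\lambda+\beta)t}\,u_t(x)\,dt, \]
the interchange being justified by well-posedness under Standing Assumption~\ref{sass:1} and Fubini. Since $u_t$ is convex for every $t$ and the kernel $\lambda e^{-(\lambda+\beta)t}$ is nonnegative and free of $x$, convexity is preserved under the integral: integrating the pointwise-in-$t$ inequality $u_t(x_2)\le\zeta u_t(x_1)+(1-\zeta)u_t(x_3)$ against $\lambda e^{-(\lambda+\beta)t}\,dt$ gives $G_\lambda(x_2)\le\zeta G_\lambda(x_1)+(1-\zeta)G_\lambda(x_3)$. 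This last step is routine mixing of convex functions; the real work lies entirely in the fixed-horizon convexity of $u_t$.
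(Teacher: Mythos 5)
Your reduction to the fixed-horizon quantity $u_t(x)=\E^x[g(X_t)]$ and the final step recovering $G_\lambda$ by integrating against $\lambda e^{-(\lambda+\beta)t}\,dt$ are fine and coincide with the paper, which also writes $G_\lambda(x)=\int_0^\infty \lambda e^{-\lambda t}\,\E^x[e^{-\beta t}g(X_t)]\,dt$ at the very end of its proof. But the substance of Theorem~\ref{thm:hobson} is precisely the fixed-horizon convexity, and your sketch of that part has a genuine gap at exactly the step you flag as the main obstacle. The claim that the coupling can be refined so that the middle path is absorbed into the upper envelope with probability $1-\zeta$ and into the lower with probability $\zeta$ "by optional sampling" is unjustified and in general false: the relative-position process $\pi_t=(X^{x_3}_t-X^{x_2}_t)/(X^{x_3}_t-X^{x_1}_t)$ is a ratio of martingales, not a martingale, so optional sampling gives no such absorption law, and the probability of meeting one envelope before the other depends on the size of $a(\cdot)$ near $x_1$ versus near $x_3$, not only on the starting points. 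Worse, even if those probabilities were exactly $(\zeta,1-\zeta)$, the event of absorption into the lower envelope is correlated with the terminal values $g(X^{x_1}_t)$ and $g(X^{x_3}_t)$ (paths that drift down are the ones absorbed below), so a "fair mixture" bound only produces conditional expectations, not the unconditional ones needed for $u_t(x_2)\le\zeta u_t(x_1)+(1-\zeta)u_t(x_3)$. There is also a preliminary problem: Standing Assumption~\ref{sass:1} gives only uniqueness in law (Engelbert--Schmidt), so three strong solutions driven by a \emph{common} Brownian motion need not exist, and the ordering argument of Theorem~\ref{thm:simpleincreasing} that you invoke is the Doeblin coupling with \emph{independent} drivers glued at the meeting time, not a synchronous coupling; finally, the copies need not be bounded, and the middle path need not be absorbed by the fixed time $t$ at all.

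It is instructive to compare with how the paper gets around exactly these difficulties. It runs the three copies $X,Y,Z$ with independent driving Brownian motions, sets $\sigma=H^{xy}\wedge H^{yz}\wedge t$, and on the event that two paths have met uses exchangeability (strong Markov property plus uniqueness in law) to obtain identities in law such as $(Z_t-X_t)g(Y_t)\stackrel{\sL}{=}(Z_t-Y_t)g(X_t)$, so that on those events the convexity inequality holds with equality in expectation; on the no-meeting event the paths are still ordered and pathwise convexity is applied with the \emph{random} weights $(Z_t-Y_t)$ and $(Y_t-X_t)$. The random weights are converted into the deterministic weights $(z-y)$ and $(y-x)$ only at the very last step, by combining independence of the three copies with the true martingale property ($\E[Z_t]=z$, etc.). This last point is essential: convexity preservation fails for strict local martingales (see the paper's remark on the three-dimensional Bessel process after Proposition~\ref{prop:Gconv}), so any correct proof must use $\E^x[X_t]=x$ in an indispensable way; in your sketch the martingale property is never invoked at a point where a local martingale would not do equally well, which is a structural sign that the argument cannot be completed as stated.
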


\begin{proof}
This result extends Hobson~\cite[Theorem 3.1]{Hobson:98}.

For $x < y < z$ define a triple of processes $(X,Y,Z)$ via
\[ dX_t = a(X_t) dB^X_t + b(X_t) dt \hspace{20mm} X_0 = x, \]
and similarly $dY_t = a(Y_t) dB^Y_t + b(Y_t) dt$ subject to $Y_0=y$ and $dZ_t = a(Z_t) dB^Z_t + b(Z_t) dt$ subject to $Z_0=z$. (Here we use the more economical notation $(X,Y,Z)$ where normally we might write $(X^x, X^y, X^z)$.)

Couple the processes by making the three driving Brownian motions independent.
Let $H^{xy} = \inf \{ u : X_u = Y_u \}$ and $H^{yz}= \inf \{ u : Y_u = Z_u \}$. Fix $t > 0$ and let $\sigma = H^{xy} \wedge H^{yz} \wedge t$. Then, by symmetry, on $\sigma = H^{xy}$,
\[ (Z_t - X_t)g(Y_t) \stackrel{\sL}{=} (Z_t - Y_t)g(X_t)  \hspace{20mm} Y_t g(Z_t) \stackrel{\sL}{=} X_t g(Z_t) \]
so that
\begin{equation}
\label{eq:XYZ1}
\E[ (Z_t - X_t) g(Y_t)I_{ \{\sigma = H^{xy} \} }] = \E[ (Z_t - Y_t)g(X_t) I_{ \{  \sigma = H^{xy} \} }] + \E[ (Y_t - X_t)g(Z_t) I_{ \{ \sigma = H^{xy} \} }]. \end{equation}

Similarly, 
\begin{equation}
\label{eq:XYZ2}
 \E[ (Z_t - X_t) g(Y_t) I_{ \{\sigma = H^{yz} \} } ] = \E[ (Z_t - Y_t)g(X_t)I_{ \{\sigma = H^{yz}\}}] + \E[ (Y_t - X_t)g(Z_t)I_{\{\sigma = H^{yz}\}}].
\end{equation}

Finally, on $H^{xy} \wedge H^{yz}> t$ we have $\sigma = t$, $X_t < Y_t < Z_t$ and by convexity of $g$
\[ (Z_t - X_t) g(Y_t) I_{ \{ \sigma < H^{xy}\wedge H^{xz} \} }  \leq (Z_t - Y_t)g(X_t)I_{ \{ \sigma < H^{xy}\wedge H^{xz} \} } + (Y_t - X_t)g(Z_t) I_{ \{ \sigma < H^{xy}\wedge H^{xz} \} }. \]
Taking expectations, adding the result to \eqref{eq:XYZ1} and \eqref{eq:XYZ2}, and multiplying by $e^{-\beta t}$ we obtain
\[ \E[ (Z_t - X_t) e^{-\beta t}g(Y_t)] \leq \E[ (Z_t - Y_t) e^{-\beta t}g(X_t)] + \E[ (Y_t - X_t)e^{-\beta t}g(Z_t) ]. \]
Using the fact that $X$, $Y$ and $Z$ are independent
we conclude that
\[ (z-x) \E[e^{-\beta t}g(Y_t)] \leq(z - y) \E[e^{-\beta t}g(X_t)] +(y - x)\E[e^{-\beta t}g(Z_t) ] \]
and that $\E[e^{-\beta t} g(X^x_t)]$ is convex in $x$.

Since $G_\lambda(x) = \int_0^\infty \lambda e^{-\lambda t} \E^x[e^{-\beta t} g(X^x_t)] dt$ the convexity property is also inherited by $G_\lambda$.


\end{proof}

\begin{rem}
\label{rem:concave}
The same proof shows that if $g$ is concave, then $G_\lambda$ is concave.
\end{rem}

We close this section with two other results which will be useful in later sections.

\begin{prop}
\label{prop:V=G}
If $G_\theta \leq g$ then $V_\theta^{(\infty)} = V_\theta^{(n)} = V_\theta^{(1)} = G_\theta$.
\end{prop}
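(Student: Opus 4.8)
The plan is to run the recursion \eqref{eq:Vdefn} forward from $V^{(0)}_\theta \equiv 0$ and to show that, under the hypothesis $G_\theta \leq g$, it stabilises immediately at $G_\theta$, so that the whole chain of equalities collapses.

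First I would identify $V^{(1)}_\theta$ with $G_\theta$, which holds with no extra hypothesis. Setting $n=1$ in \eqref{eq:Vdefn} and using $V^{(0)}_\theta \equiv 0$ gives
\[ V^{(1)}_\theta(x) = \E^x[e^{-\beta T^\theta_1} \max \{ g(X_{T^\theta_1}), 0 \}]. \]
Since $g \geq 0$ under the standing assumptions, $\max\{g,0\} = g$, and therefore $V^{(1)}_\theta(x) = \E^x[e^{-\beta T^\theta_1} g(X_{T^\theta_1})] = G_\theta(x)$. This establishes $V^{(1)}_\theta = G_\theta$.

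Next I would close a one-step induction. The hypothesis $G_\theta \leq g$ gives $V^{(1)}_\theta = G_\theta \leq g$ pointwise. Suppose inductively that $V^{(n-1)}_\theta = G_\theta \leq g$. Then $\max\{ g(X_{T^\theta_1}), V^{(n-1)}_\theta(X_{T^\theta_1}) \} = g(X_{T^\theta_1})$, and substituting this into \eqref{eq:Vdefn} returns $V^{(n)}_\theta(x) = \E^x[e^{-\beta T^\theta_1} g(X_{T^\theta_1})] = G_\theta(x)$. Hence $V^{(n)}_\theta = G_\theta$ for every $n \geq 1$. Letting $n \uparrow \infty$ (the limit defining $V^{(\infty)}_\theta$ exists and is trivial, since the sequence is constant in $n$) yields $V^{(\infty)}_\theta = G_\theta$, completing all the stated equalities.

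There is no serious obstacle here: the proof is an elementary induction whose only two inputs are the non-negativity of $g$, used to identify $V^{(1)}_\theta$ with $G_\theta$, and the hypothesis $G_\theta \leq g$, used to keep the maximum in \eqref{eq:Vdefn} pinned at $g$ at every subsequent stage. The single point worth a moment's care is confirming that the monotone limit $V^{(\infty)}_\theta$ coincides with the common value $G_\theta$, but this is immediate once the sequence $(V^{(n)}_\theta)_{n \geq 1}$ is seen to be constant.
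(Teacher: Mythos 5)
Your proof is correct and follows essentially the same route as the paper: an induction in $n$ in which the hypothesis $G_\theta \leq g$ pins the maximum in \eqref{eq:Vdefn} at $g$, so the recursion stabilises at $G_\theta$ from the first step onward. Your extra remark identifying $V^{(1)}_\theta$ with $G_\theta$ via $g \geq 0$ is a minor elaboration of what the paper treats as given, and the passage to the limit $V^{(\infty)}_\theta$ is trivial in both arguments.
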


\begin{proof}
Suppose $V^{(k)}_\theta = G_\theta \leq g$. This is true for $k=1$ by hypothesis. Then
\[ V^{(k+1)}_\theta(x) = \E^x[ e^{-\beta T^\theta_1} \{ g(X_{T^\theta_1}) \vee V^{(k)}_\theta(X_{T^\theta_1}) \}] = \E^x[  e^{-\beta T^\theta_1} g(X_{T^\theta_1})] = G_\theta(x) \leq g(x) \]
and the result follows by induction.
\end{proof}

\begin{prop}
\label{prop:cuteconvexity}
Let $Y$ be a regular martingale diffusion with state space $\sI$ and let $T$ be an independent exponential random variable.
Suppose $c: \sI \rightarrow [0,\infty)$ is bounded on compact sub-intervals of $\sI$ and is such that $\E^y[c(Y_T)] \geq c(y)$.

Let $C(y) = \E^y[ c(Y_T) ]$. Then $C$ is convex.
\end{prop}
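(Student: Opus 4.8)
The plan is to exploit the fact that a martingale diffusion is automatically in natural scale, so that convexity of a function $f$ is equivalent to the mean-value inequality $f(y) \le \E^y[f(Y_{H_x \wedge H_z})]$ for every triple $x<y<z$, where $H_x$ and $H_z$ are the hitting times of $x$ and $z$. Indeed, since $Y$ is a martingale, optional stopping gives $\E^y[Y_{H_x \wedge H_z}] = y$, so $Y$ exits $(x,z)$ at $x$ with probability $\frac{z-y}{z-x}$ and at $z$ with probability $\frac{y-x}{z-x}$; the mean-value inequality then reads exactly $C(y) \le \frac{z-y}{z-x}C(x) + \frac{y-x}{z-x}C(z)$, which is convexity. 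So it suffices to prove $C(y) \le \E^y[C(Y_{H_x \wedge H_z})]$.

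The crux is to show that $(C(Y_t))_{t \ge 0}$ is a submartingale; the displayed mean-value inequality then follows by optional stopping at $H_x \wedge H_z$ (localised if necessary and passed to the limit, using that $C$ is bounded on the compact interval $[x,z]$). By the Markov property it is enough to prove $P_u C \ge C$ for every $u \ge 0$, where $P_u h(y) = \E^y[h(Y_u)]$. Here I would use the memorylessness of the exponential time $T$, of rate $\gamma$ say: since $C(Y_u) = \E^{Y_u}[c(Y_T)]$ for an independent copy of $T$, the Markov property yields $P_u C(y) = \E^y[c(Y_{u+T})]$. Writing $f(t) = P_t c(y) = \E^y[c(Y_t)]$ and $\Phi(u) = P_u C(y) = e^{\gamma u}\int_u^\infty \gamma e^{-\gamma s} f(s)\,ds$, one computes $\Phi'(u) = \gamma(\Phi(u) - f(u)) = \gamma\, P_u(C-c)(y)$, which is non-negative because $C \ge c$ by hypothesis and $P_u$ is positivity-preserving. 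Hence $\Phi$ is non-decreasing and $P_u C(y) = \Phi(u) \ge \Phi(0) = C(y)$, as required. Equivalently, in generator form this is the identity $\sL C = \gamma(C-c) \ge 0$, so $C$ is $\sL$-subharmonic, which for a diffusion in natural scale is exactly convexity.

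I expect the main obstacle to be the justification of the submartingale step $P_u C \ge C$, rather than the (routine) optional-stopping and exit-distribution parts: it is precisely here that both the exponential law of $T$ (through memorylessness) and the self-referential hypothesis $C \ge c$ enter, and both are essential. A secondary technical point is integrability and regularity: one must know that $C$ is finite and well enough behaved (continuous and bounded on compact sub-intervals) to differentiate $\Phi$, to apply optional stopping, and to pass to the limit $t \uparrow \infty$. This is exactly what the assumption that $c$ is bounded on compact sub-intervals is designed to guarantee, since then the resolvent $C$ inherits local boundedness and continuity; I would confirm these properties before running the argument above.
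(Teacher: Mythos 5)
Your proof is correct in its essential strategy but takes a genuinely different route from the paper's. The paper fixes $x<y<z$ and proves the exit inequality $C(y) \le C(x)\Prob^y(H_x \le H_z) + C(z)\Prob^y(H_z \le H_x)$ directly, by induction along the event times $T_1, T_2,\ldots$ of an auxiliary Poisson process: at each event time the hypothesis $C\ge c$, the strong Markov property and memorylessness push the decomposition from $T_k$ to $T_{k+1}$, and the residual term $\E^y[c(Y_{T_k}) I_{ \{ T_k < H_x \wedge H_z \} }]$ is killed in the limit using only boundedness of $c$ (not of $C$) on $[x,z]$ and regularity of $Y$. You instead identify the submartingale structure: $P_u C \ge C$ via the resolvent computation $\Phi'(u) = \gamma(\Phi(u)-f(u)) = \gamma P_u(C-c)(y) \ge 0$, then conclude by optional stopping. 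Both arguments rest on exactly the same two ingredients (memorylessness plus $C \ge c$), and your final step --- converting the exit inequality into convexity via the martingale exit distribution --- is identical to the paper's. Your version is shorter and makes rigorous the paper's own remark following the proposition (that heuristically $\sL^Y C = \theta(C-c)$, so $C \ge c$ forces convexity) without discussing regularity of $C$ as an ODE solution; the paper's induction has the advantage of never needing any control on $C$ itself.

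That last point is where your write-up has a genuine soft spot. To pass from $C(y) \le \E^y[C(Y_{t\wedge H_x\wedge H_z})]$ to $C(y) \le \E^y[C(Y_{H_x \wedge H_z})]$ you need $C$ bounded on $[x,z]$ (Fatou goes the wrong way for submartingales), and you assert that this local boundedness is ``inherited'' from $c$ being bounded on compact sub-intervals. That is not automatic: $C(y)=\E^y[c(Y_T)]$ depends on the values of $c$ on all of $\sI$, and a locally bounded $c$ can make $C$ infinite or locally unbounded in principle. The gap is fillable: if $C(x)=\infty$ or $C(z)=\infty$ the convexity inequality is vacuous, so assume both are finite; then for $y' \in (x,z)$, splitting on whether $T$ occurs before the exit time $H=H_x\wedge H_z$ and using the strong Markov property and memorylessness gives $C(y') = \E^{y'}[c(Y_T)I_{ \{ T<H \} }] + \E^{y'}[C(Y_H)I_{ \{ T \ge H \} }] \le \sup_{[x,z]}c + C(x)+C(z)$, which is the required bound. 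It also delivers the finiteness of $C(y)$ needed to differentiate $\Phi$ and to get $\E^y[C(Y_t)] \le e^{\gamma t}C(y) < \infty$, so that the submartingale property is even well posed. With that lemma inserted, your argument is complete.
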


Note that convexity of $c$ is a sufficient but not necessary condition for $\E^y[c(Y_T)] \geq c(y)$.
\begin{proof}
Let $\{T_1, T_2, \ldots \}$ be the event times of a Poisson process, let $T_0=0$ and let $\{ S_k = T_k - T_{k-1} \}_{k \geq 1}$ be the inter-arrival times.

Fix $x,y,z \in \sI$ with $x<y<z$. Let $Y_0=y$ and for $w \in \{x,z \}$ define $H^t_w = \inf \{u > t: Y_u = w \}$. We have $C(y) = \E^y[c(Y_{T_1})]$ and then
\[ C(y) = \E^y\left[c(Y_{T_1}) I_{ \{ H_x \leq H_z \wedge T_1 \} }\right] + \E^y\left[c(Y_{T_1}) I_{ \{ H_z \leq H_x \wedge T_1 \} }\right] + \E^y\left[c(Y_{T_1}) I_{ \{ T_1 < H_x \wedge H_z \} }\right]. \]
By the Strong Markov property of $Y$ and the fact that $T_1$ is memoryless we have
\begin{eqnarray*}
\E^y\left[c(Y_{T_1}) I_{ \{ H_x \leq H_z \wedge T_1 \} }\right] & = & \E^y\left[ \E^y [ c(Y_{T_1}) I_{ \{ H_x \leq H_z \wedge T_1 \} }| \sF_{H_x \wedge H_y \wedge T_1} ] \right] \\
& = & \E^y \left[ \E^x[ c(Y_{T_1}) ] I_{ \{ H_x \leq H_z \wedge T_1 \} } \right] \\
&=& C(x) \Prob^y(H_x \leq H_z \wedge T_1).
\end{eqnarray*}
Similarly, $\E^y\left[c(Y_{T_1}) I_{ \{ H_z \leq H_x \wedge T_1 \} }\right] = C(z) \Prob^y(H_z \leq H_x \wedge T_1)$.

Suppose inductively that
\begin{equation}
\label{eq:induction} C(y) \leq C(x) \Prob^y(H_x \leq H_z \wedge T_k) + C(z) \Prob^y(H_z \leq H_x \wedge T_k) +  \E^y\left[ c(Y_{T_k})I_{ \{ T_k < H_x \wedge H_z \} } \right].
\end{equation}
We have shown this is true for $k=1$. Let $Y^{T_k}$ be given by $Y^{T_k}_t = Y_{T_k+t}$. Then, on $T_k < H_x \wedge H_z$, and writing $S$ for $S_{k+1}$,
\begin{eqnarray*}
c(Y_{T_k}) & \leq & \E^{Y_{T_k}}\left[ \left. c(Y_{T_{k+1}}) \right| \sF_{T_k} \right] \\
& = &  \E^{Y_{T_k}} \left[c(Y^{T_k}_S) I_{ \{ H_x \leq H_z \wedge (T_k+S) \} }\right] + \E^{Y_{T_k}}\left[c(Y^{T_k}_S) I_{ \{ H_z \leq H_x \wedge (T_k+S) \} }\right] \\
&& \hspace{70mm} + \E^{Y_{T_k}}\left[c(Y^{T_k}_S) I_{ \{ T_k+S < H_x \wedge H_z \} }\right] \\
& = & C(x) \Prob^{Y_{T_k}} (H_x \leq H_z \wedge (T_k+S)) + C(z)  \Prob^{Y_{T_k}} (H_x \leq H_z \wedge (T_k+S)) \\
&& \hspace{70mm}+ \E^{Y_{T_k}}\left[c(Y^{T_k}_S) I_{ \{ T_k+S < H_x \wedge H_z \} }\right] .
\end{eqnarray*}
It follows that
\begin{eqnarray*}
\E^y\left[ c(Y_{T_k})I_{ \{ T_k < H_x \wedge H_z \} } \right]
& \leq & C(x) \Prob^y(T_k < H_x \leq H_z \wedge T_{k+1}) \\
& & \hspace{8mm}+ C(z) \Prob^y(T_k < H_z \leq H_x \wedge T_{k+1}) +  \E^y\left[ c(Y_{T_{k+1}})I_{ \{ T_{k+1} < H_x \wedge H_z \} } \right].
\end{eqnarray*}
Substituting this inequality into \eqref{eq:induction} we get the equivalent statement for $k+1$. Hence we know that \eqref{eq:induction} holds for all $k\geq1$. Letting $k \uparrow \infty$, and using the fact that $Y$ is regular and $c$ is bounded on $[x,z]$ we get
\[C(y) \leq C(x) \Prob^y(H_x \leq H_z) + C(z) \Prob^y(H_z \leq H_x). \]
Then, using the martingale property of $Y$ we get $C(y) \leq C(x) \frac{z-y}{z-x} + C(z)\frac{y-x}{z-x}$ and $C$ is convex.
\end{proof}

\begin{rem}
The argument extends without change to cover the case where the unit-rate exponential $T$ is replaced by the first event time $T^\theta_1$ of a Poisson process with rate $\theta = \{ \theta(Y_t) \}_{t \geq 0}$, provided $T^\theta_1$ is almost surely finite. An alternative strategy for a proof is to use the fact that we expect $C$ to solve $\sL^Y C  = \theta (C-c)$, where $\sL^Y$ is the generator of $Y$. Then, since $\sL$ has no first order derivative, if $C \geq c$ everywhere, then $C$ is convex.
\end{rem}

\begin{eg}
Let $B$ be Brownian motion. For $\phi \geq 0$ set $h_\phi(x) = |x| + \phi \left\{ \frac{|1-x| + |1+x|}{2} - |x| \right\}$. Then $h_\phi$ is symmetric about zero, and piecewise linear with kinks at $0$ and $\pm 1$. Moreover, $h_\phi(0)=\phi$ and $h_\phi(x)= |x|$ for $|x| \geq 1$. Note that $h_\phi$ is convex if and only if $\phi \leq 1$.

Let $T_\lambda$ be an exponential of rate $\lambda>0$ and let $\xi = \sqrt{2 \lambda}$. Set $H_\phi(x) = \E^x[h_\phi(B_{T_\lambda})]$. Then, with $L^{B,y}_t$ denoting the local time of $B$ at $y$ by time $t$,
\begin{eqnarray*} H_\phi(x) & = & h_\phi(x) + \frac{\phi}{2} \E^x[L^{B,1}_{T_\lambda}] + \frac{\phi}{2} \E^x[L^{B,-1}_{T_\lambda}] + (1-\phi) \E^x[L^{B,0}_{T_\lambda}] \\
& = & h_\phi(x) + \frac{\phi}{2} \frac{e^{-\xi |1-x|}}{\xi} + \frac{\phi}{2} \frac{e^{-\xi |1+x|}}{\xi} + (1-\phi) \frac{e^{-\xi |x|}}{\xi}.
\end{eqnarray*}
Then, for $x \in (-1,1)$,
\[ H_\phi''(x) = \xi^2(H_\phi(x)- h_\phi(x)) = \xi \left[ \phi e^{-\xi} \cosh (\xi x) + (1-\phi) e^{-\xi |x|} \right], \]
and for $|x| \geq 1$,
\[ H_\phi''(x) = \xi^2(H_\phi(x)- h_\phi(x)) = \xi e^{-\xi |x|} \left[ \phi \cosh \xi + (1-\phi) \right]. \]
Then $H_\phi$ is convex everywhere if and only if $H_\phi \geq h_\phi$ everywhere, if and only if $\phi \leq \frac{1}{1 - e^{-\xi}}$.
In particular, if $1<\phi \leq  \frac{1}{1 - e^{-\xi}}$ then $H_\phi$ is convex, even though the payoff function $h_\phi$ is not.
\end{eg}

\subsection{Reduction of the problem to a problem in natural scale}
\label{ssec:natscale}
Recall that our assumption is that $X$ is a regular diffusion with state space $\sI$ which solves the SDE $dX_t = a(X_t)dB_t + b(X_t) dt$. Moreover, Standing Assumption~\ref{sass:1} gives that $b/a^2$ is locally integrable. Then we can define $s:\sI \rightarrow \R$ by
\( s'(x) = \exp ( - \int^x \frac{b(z)}{a(z)^2} dz ) \)
and if we set $M_t=s(X_t)$ then $M=(M_t)_{t \geq 0}$ solves $dM_t = \eta(M_t) dB_t$ where $\eta = (a s') \circ s^{-1}$. The key point is that $M$ is a (local) martingale. Moreover $M$ is a regular diffusion with state space $\sI_M = s(\sI)$. The increasing, invertible function $s$ is called the scale function and $M$ is said to be in natural scale (Rogers and Williams~\cite[V.46]{RogersWilliams:01}). Note that $s$ is only determined up to a linear transformation, so we may choose constants to make $\sI_M$ have a convenient form. 

Let $\hat{g}= g \circ s^{-1}$ and $\hat{\theta} = \theta \circ s^{-1}$. Then $e^{-\beta \tau} g(X_\tau) = e^{-\beta \tau} \hat{g}(M_\tau)$ and ${\theta}(X_t) = \hat{\theta}(M_t)$ so that the inhomogeneous Poisson process with rate $(\theta(X_t))_{t \geq 0}$ can be identified with the inhomogeneous Poisson process with rate $(\hat{\theta}(M_t))_{t \geq 0}$. Finally,
\[ \hat{V}_{\hat{\theta}}(m): = \sup_{\tau \in \sT(\T^{\hat{\theta}})} \E^{M_0=m} \left[e^{-\beta \tau} \hat{g}(M_\tau) \right]
     = \sup_{\tau \in \sT(\T^{{\theta}})} \E^{X_0=s^{-1}(m)} \left[e^{-\beta \tau} {g}(X_\tau) \right] = V_\theta (s^{-1}(m))  \]
so that $\hat{V}_{\hat{\theta}} = V_\theta \circ s^{-1}$.

Since $s^{-1}$ is increasing we conclude that proving that $V_\theta$ is increasing is equivalent to proving that $\hat{V}_{\hat{\theta}}$ is increasing. Hence we may restrict attention to diffusions in natural scale.

When we turn to problems concerning convexity, then, recall Example~\ref{eg:nonconvex}, we only expect general convexity results for $V_\theta$ in cases where the diffusion $X$ is already in natural scale.

\subsection{Boundary behaviour}
\label{ssec:boundary}
Suppose $M$ is a regular diffusion in natural scale with state space $\sI_M$ with endpoints $\hat{\ell}$ and $\hat{r}$ with $\hat{\ell}<\hat{r}$. Suppose $M$ solves $dM_t = \eta(M_t) dB_t$ where $1/\eta^2$ is locally integrable.

Suppose $\hat{e} \in \{ \hat{\ell}, \hat{r} \}$ is finite. If $M$ can reach $\hat{e}$ in finite time then we say $\hat{e}$ is accessible (see Rogers and Williams~\cite[Section V.47]{RogersWilliams:01} or Revuz and Yor~\cite[Section VII.3]{RevuzYor:99} for terminology). If $\hat{e}$ is accessible then we assume that $M$ is absorbed at $\hat{e}$. The necessary and sufficient condition that $\hat{e}$ can be reached in finite time is $I_\eta(\hat{e}) < \infty$ where $I_\eta(\hat{e})=\int_{\hat{e}} |m-\hat{e}| \frac{dm}{\eta(m)^2}$. Otherwise, if $I_\eta(\hat{e})=\infty$, then $\hat{e}$ cannot be reached in finite time and we say $\hat{e}$ is a natural boundary. If $\hat{e}$ is a finite, accessible endpoint then $\hat{e} \in \sI_M$; otherwise, if $\hat{e}$ is natural then $\hat{e} \notin \sI_M$.

Now suppose $\hat{e} \in \{ \hat{\ell}, \hat{r} \}$ is infinite. If, for $y \in (\hat{\ell},\hat{r})$ we have $\lim_{x \rightarrow \hat{e}} \Prob^x(H_y < \infty) > 0$ (or equivalently $\lim_{x \rightarrow \hat{e}} \E^x[e^{- \gamma H_y}] > 0$ for each $\gamma>0$) then $\hat{e}$ is an entrance boundary. The condition that $\hat{e}$ is an entrance boundary is $J_\eta(\hat{e})<\infty$ where $J_\eta(\hat{e})=\int_{\hat{e}} \frac{dm}{\eta(m)^2}$. Otherwise $\hat{e}$ is a natural boundary and $\hat{e} \notin \sI_M$. It is not possible for $M$ to explode to an infinite boundary point in finite time.

Suppose $X$ solving \eqref{eq:Xsde} is a time-homogeneous regular diffusion, not in natural scale, on a state space $\sI$ with endpoints $\ell$ and $r$. We classify the boundary points of $X$ by using the classification of the corresponding boundary points for $M=s(X)$. In particular, for $e \in \{ \ell,r \}$, if $|s(e)|<\infty$ and $\int_e \frac{|s(x)-s(e)|}{s'(x) a(x)^2} dx < \infty$ then $e$ can be reached in finite time, and we take $e$ to be absorbing. If $|s(e)| < \infty$ and $\int_e \frac{|s(x)-s(e)|}{s'(x) a(x)^2} dx = \infty$ or if $|s(e)| = \infty$ and $\int_e \frac{1}{s'(x) a(x)^2} dx < \infty$ then $e$ is natural.

\begin{sass}
\label{sass:2}
Boundary points are either natural, or if they can be reached in finite time, they are absorbing.
\end{sass}

\section{Monotonicity and convexity of $V^{(\infty)}_\theta$.}\label{sec:monotonicity}

Consider the solution $V^{(n)}_\theta$ of the Poisson optimal stopping problem, under the restriction that stopping must occur at one of the first $n$ events of the Poisson process $N^\theta$. We have
\begin{equation}
\label{eq:Vndef}
V_\theta^{(n)}(x) = \sup_{\tau \in \sT{(\{T^\theta_1, T^\theta_2 \ldots T^\theta_n}\})} \E^x \left[ e^{-\beta \tau} g(X_\tau) \right] .
\end{equation}
Set $V_\theta^{(0)}(x) = 0$. Then $V_\theta^{(1)} = G_\theta$ and  $V^{(n)}_\theta(x) =   \E^x \left[ e^{-\beta T^\theta_1} \max \left\{ g(X_{T^\theta_1}) , V^{(n-1)}_\theta(X_{T^\theta_1}) \right\} \right]$.

Lange et al~\cite{LangeRalphStore:19} consider a multidimensional version of the Poisson optimal stopping problem (with constant stopping rate) and consider the sequence $\{V_\lambda^{(n)}\}_{n \geq 0}$. They observe that $V_\lambda^{(n)}$ is increasing in $n$ and show, under an assumption that a certain iterated expectation is finite, that $V_\lambda^{(n)}$ converges to $V^{(\infty)}_\lambda = V_\lambda$ geometrically fast. We work in one-dimension but allow for stopping opportunities arising from a state-dependent Poisson process.

Since $V_\theta^{(n)}$ is increasing in $n$ there must exist a limit which is finite on $\sI$ since $V^{(n)}_\theta < w$. Moreover, by monotone convergence
\begin{equation}
\label{eq:mono}
V_\theta^{(\infty)}( x) = \lim_n \E^x \left[ e^{-\beta T^\theta_1} \left\{ g(X_{ T^\theta_1}) \vee V^{(n)}_\theta (X_{T^\theta_1}) \right\} \right] = \E^x \left[ e^{-\beta T^\theta_1} \left\{ g(X_{ T^\theta_1}) \vee V^{(\infty)}_\theta (X_{T^\theta_1}) \right\} \right].
\end{equation}





In this section we are interested in the shape of the value function $V^{(\infty)}_\theta$. We saw some preliminary results in this direction in Section~\ref{ssec:firstresults}. In Theorem~\ref{thm:simpleincreasing} we saw that if both $g$ and $\theta$ are increasing then so is $V_\theta$; in Theorem~\ref{thm:hobson} we saw that if $g$ is convex and the arrival rate of the Poisson process is constant then $G_\lambda$ is convex.
In section we argue that it is not the shape of $g$ which is crucial, but rather the monotonicity/convexity properties of $\Psi$ where $\Psi : \sI \mapsto \R_+$ is given by
\[ \Psi(x)=\frac{g(x) \theta(x)}{\beta + \theta(x)}. \]
In particular, if $\Psi$ and $\theta$ are increasing then $G_\theta$ (Corollary~\ref{cor:Ginc}) and $V^{(\infty)}_\theta$ (Theorem~\ref{thm:finalinc}) are increasing, and if $\Psi$ is convex then $V^{(\infty)}_\theta$ is convex (Theorem~\ref{thm:Vconv}). (In the next section we give conditions under which $V^{(\infty)}_\theta = V_\theta$, and then monotonicity and convexity of $V_\theta$ are inherited from $V^{(\infty)}_\theta$.) Since $g$ increasing and $\theta$ increasing implies $\Psi$ is increasing, and $g$ convex and $\theta$ constant implies $\Psi$ is convex, the results of this and the next section include the results of Section~\ref{ssec:firstresults} as special cases, albeit under slightly stronger assumptions.

\begin{lem}\label{lem:nonexplode}
Suppose $(\theta/a^{2})$ is locally integrable, and further that
if an endpoint $e \in \{ \ell, r\}$ is attainable, then $\int_e \theta(x) \frac{|s(x)-s(e)|}{s'(x) a(x)^2} dx < \infty$ and $\theta(e) \in [0,\infty)$.

Then $\int_0^t \theta(X_u)du$ does not explode and $T^\theta_n \uparrow \infty$ almost surely.
\end{lem}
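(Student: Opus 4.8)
The plan is to reduce the claim about the event times $T^\theta_n$ to a statement about the accumulated intensity $\Lambda_t:=\int_0^t\theta(X_u)\,du$, and then to control $\Lambda_t$ through expected-occupation (Green's function) estimates: local integrability of $\theta/a^2$ handles the interior, while the boundary hypothesis handles neighbourhoods of attainable endpoints. First I would record the equivalence between the two assertions. Conditional on the trajectory of $X$, the process $N^\theta$ is a Poisson process of intensity $(\theta(X_t))_{t\ge0}$, so the number of events in $[0,t]$ is conditionally Poisson with mean $\Lambda_t$. Hence $\{\Lambda_t<\infty\text{ for all }t\}$ and $\{T^\theta_n\uparrow\infty\}$ coincide up to a null set: if $\Lambda_t<\infty$ for every $t$ then only finitely many events lie in each $[0,t]$, forcing $T^\theta_n\to\infty$, whereas $\Lambda_{t_0}=\infty$ would place infinitely many events before $t_0$. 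Thus it suffices to show that $\Lambda_t<\infty$ for all $t$, almost surely.

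It is convenient to pass to natural scale, writing $M=s(X)$, $dM=\eta(M)\,dB$ with $\eta=(as')\circ s^{-1}$, and $\hat\theta=\theta\circ s^{-1}$, so that $\Lambda_t=\int_0^t\hat\theta(M_u)\,du$. For a bounded interval $(c,d)$ with exit time $\tau_{c,d}$ the expected occupation admits the Green's function representation
\[ \E^{m}\Bigl[\int_0^{\tau_{c,d}}\hat\theta(M_u)\,du\Bigr]=\int_c^d G_{c,d}(m,\nu)\,\frac{2\hat\theta(\nu)}{\eta(\nu)^2}\,d\nu, \]
where $G_{c,d}$ is the Dirichlet Green's function of $d^2/d\nu^2$ on $(c,d)$, satisfying $0\le G_{c,d}(m,\nu)\le d-c$. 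Since $s'$ is continuous and strictly positive, local integrability of $\theta/a^2$ is equivalent to local integrability of $\hat\theta/\eta^2$, so the displayed expectation is finite whenever $[c,d]$ is a compact subinterval of the open state space. For a fixed $t$ and almost every $\omega$ whose path on $[0,t]$ avoids every attainable endpoint, the range of the path is a compact subinterval of the open state space (natural endpoints not being reached in finite time), and the above bound gives $\Lambda_t<\infty$; equivalently, one exhausts the state space by intervals $(c_n,d_n)$ whose exit times tend to infinity whenever the endpoints are natural.

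The crux is a neighbourhood of an attainable endpoint, say $r$ (so $\hat r=s(r)$ is finite), which is absorbing with $\theta(r)<\infty$. After $H_r$ the integrand is the constant $\theta(r)$, contributing $(t-H_r)\theta(r)<\infty$, so on $\{H_r\le t\}$ it remains to bound the occupation of a one-sided neighbourhood $(r-\varepsilon,r)$ accumulated before $H_r$. I would use the Green's function governing the occupation of $M$ before absorption at $\hat r$: for $\nu$ near $\hat r$ this Green's function is dominated by $C(\hat r-\nu)$, so the expected occupation of $(r-\varepsilon,r)$ before $H_r$ is at most a constant times $\int^{\hat r}(\hat r-\nu)\hat\theta(\nu)\eta(\nu)^{-2}\,d\nu$. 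The substitution $\nu=s(x)$, with $\hat r-\nu=|s(x)-s(r)|$, $d\nu=s'(x)\,dx$ and $\eta(\nu)^2=a(x)^2s'(x)^2$, transforms this into
\[ C\int_r|s(x)-s(r)|\,\frac{\theta(x)}{s'(x)\,a(x)^2}\,dx, \]
which is finite by hypothesis; the endpoint $\ell$ is treated symmetrically. Having finite expectation, this occupation is almost surely finite, and combining it with the interior estimate and the post-absorption term yields $\Lambda_t<\infty$ for every fixed $t$, hence simultaneously for all $t$ on a single almost sure event.

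The main obstacle is precisely this boundary step. The naive pathwise estimate, bounding $\Lambda$ by $\sup_x L^x_{H_r}$ times $\int\theta/a^2$, fails because $\theta/a^2$ need not be integrable up to $r$; what rescues the argument is that the expected local time (equivalently the Green's function) vanishes like $|s(x)-s(r)|$ as $x\to r$, and it is exactly this decay that matches the weight appearing in the hypothesised integral, so the estimate must be carried out in expectation rather than pathwise. The remaining labour is bookkeeping: decomposing $[0,t]$ at the hitting times of the attainable endpoints, checking that the strong-Markov (excursion) argument genuinely identifies the expected occupation of $(r-\varepsilon,r)$ before $H_r$ with the Green's function integral above, and confirming that the passage from finiteness of $\Lambda_t$ for each fixed $t$ to finiteness for all $t$ holds off a single null set.
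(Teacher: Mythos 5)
Your proposal is correct and follows essentially the same route as the paper: both reduce the claim to non-explosion of the additive functional $\int_0^t \theta(X_u)\,du$ via the conditional-Poisson equivalence, and then control that functional by occupation-density estimates --- local integrability of $\theta/a^2$ against a bounded kernel on compact interior sets, linear decay of the kernel (expected local time, equivalently the Green's function) at an attainable endpoint exactly matching the weighted integral $\int_e |s(x)-s(e)|\theta(x)/(s'(x)a(x)^2)\,dx$ in the hypothesis, and $\theta(e)<\infty$ for the post-absorption contribution. The only difference is one of presentation: the paper realises these estimates through an explicit time-change construction from a Brownian motion $W$ and the occupation times formula (writing the functional as $\int \theta(x)\,L^{W,s(x)}\,/(s'(x)a(x)^2)\,dx$), whereas you phrase the same bounds as Green's-function identities for the scaled diffusion $M=s(X)$ directly.
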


\begin{proof}
Fix $c$ in the interior of $\sI$ and define $s(x) = \int_c^x dy \exp \left( - \int_c^y \frac{b(z)}{a(z)^2} dz \right)$. Then $s$ is a scale function for $X$ and $M=s(X)$ is a local martingale with $dM_t = \eta(M_t) dB_t$ where $\eta(\cdot) = (s'a) \circ s^{-1} (\cdot)$. Let $W$ be a Brownian motion started at $s(x_0)$, let $H = H^W_{s(\ell),s(r)} = \inf \{u : W_u \notin (s(\ell),s(r)) \}$ and define $\Phi_u = \int_{0}^{u} \eta(W_s)^{-2} ds$ on $u < H$ with $\Phi_u = \infty$ on $u \geq H$. Then, by the Occupation Times Formula (Revuz and Yor~\cite[VI.1.6]{RevuzYor:99}), for $u < H$ we have
\[ \Phi_u = \int_{s(\sI)} \frac{1}{\eta(w)^2} L^{W,w}_{u} dw  = \int_{\sI} \frac{1}{s'(x) a(x)^2} L^{W,s(x)}_{u} dx \]
where $L^{W,w}_s$ is the local time of $W$ at $w$ by time $s$.
Necessarily $\Phi$ is strictly increasing and increases to infinity.

Let $A$ be inverse to $\Phi$ and let $M_t = W_{A_t}$. Then $A$ does not explode in finite time and $M$ solves $dM_t = \eta(M_t) dB_t$ for some Brownian motion $B$. Finally, let $X = s^{-1}(M)$. Then $X$ solves \eqref{eq:Xsde} with $X_0=s^{-1}(M_0) = s^{-1}(W_0) =x_0$.

Now, with this set-up, for $t \leq H^X_{\ell,r} = \inf \{t : X_t \notin (\ell,r) \}$ (note that $H^W_{s(\ell),s(r)} = A_{H^X_{\ell,r}}$),
\begin{equation}
\label{eq:nonexplode}
 \int_0^t \theta(X_u)du = \int_0^t \theta \circ s^{-1}(W_{A_u})du 
= \int_{s(\sI)} \frac{\theta \circ s^{-1}(w)}{\eta(w)^2} L^{W,w}_{A_t} dw = \int_{\ell}^r \frac{\theta(x)}{s'(x)a(x)^2} L^{W,s(x)}_{A_t} dx
\end{equation}
and for $t >  H^X_{\ell,r}$,
\begin{equation}
\label{eq:nonexplode2}
 \int_0^t \theta(X_u)du =  \int_{\ell}^r \frac{\theta(x)}{s'(x)a(x)^2} L^{W,s(x)}_{H^W_{s(\ell),s(r)}} dx + \theta(\ell)[t-H^X_\ell]^+  + \theta(r)[t-H^X_r]^+ .
\end{equation}
In particular, if both boundaries are natural, then using the fact that $s'$ is bounded on compact subsets of $(\ell,r)$ and $\theta/a^2$ is locally integrable we conclude from \eqref{eq:nonexplode} that $\int_0^t \theta(X_u)du$ is finite almost surely for each $t$. If one or more boundaries of $\sI$ is accessible (say $\ell$) then the same conclusion follows from the fact that for $x_0 > \ell$, $\E^{W_0=s(x_0)}[L^{W,s(x)}_{H^W_{s(\ell),s(r)}}] < \E^{W_0=s(x_0)}[L^{W,s(x)}_{H^W_{s(\ell),\infty}}] = 2 [(s(x) \wedge s(x_0))-s(\ell)]$ and hence $\E^{X_0=x_0} \left[ \int_{\ell} \frac{\theta(x)}{s'(x)a(x)^2} L^{W,s(x)}_{H^W_{s(\ell),s(r)}} dx \right] \leq 2 \int_\ell \frac{\theta(x)}{s'(x) a(x)^2} (s(x)-s(\ell)) dx < \infty$.

Let $\Gamma$ be random, and let $N$ be a Poisson process which is independent of $\Gamma$. It is easily seen that $N_\Gamma < \infty$ almost surely if and only if $\Gamma<\infty$ almost surely. It follows that $N_{\int_0^t \theta(X_u)du} < \infty$ for each $t$ almost surely and equivalently $(T^\theta_n)_{n \geq 1}$ increases to infinity almost surely.

\end{proof}
In addition to Standing Assumptions~\ref{sass:1} and \ref{sass:2}, for the rest of the paper we assume
\begin{sass}
\label{sass:3}
$(\theta/a^{2})$ is locally integrable. 
If an endpoint $e \in \{ \ell, r\}$ is attainable, then $\int_e \theta(x) \frac{|s(x)-s(e)|}{s'(x) a(x)^2} dx < \infty$ and $\theta(e) \in [0,\infty)$.
\end{sass}

For $h:\sI \mapsto \R_+$ define $\Psi_h : \sI \mapsto \R_+$ by $\Psi_h(x) = \frac{h(x)\theta(x)}{\beta + \theta(x)}$. Then $\Psi = \Psi_g$.
\begin{lem}
\label{lem:inch}
Let $Y$ solve
\[ dY_s   =  \frac{a(Y_s)}{ \sqrt{\beta + \theta(Y_s)} } dW_s + \frac{b(Y_s)}{\beta + \theta(Y_s) } ds \]
with initial condition $Y_0=x$.
Then
\begin{equation}
\label{eq:Psih}
 \E^x[ e^{-\beta T_1^\theta} h(X_{T^\theta_1})] = \E^x[\Psi_h(Y_T)]
\end{equation}
where $T$ is a unit-rate exponential random variable which is independent of $Y$.
\end{lem}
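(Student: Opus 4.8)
The plan is to recognise $Y$ as a time-change of $X$ and then reduce both sides of \eqref{eq:Psih} to integrals over the path of $X$. First I would rewrite the left-hand side by conditioning on the trajectory of $X$. Conditional on $X$, the random time $T^\theta_1$ is the first point of a Poisson process of intensity $\theta(X_u)\,du$, so it has conditional density $\theta(X_t)\exp(-\int_0^t\theta(X_u)\,du)$ on $[0,\infty)$, with the remaining mass sitting on $\{T^\theta_1=\infty\}$, where the integrand vanishes because of the factor $e^{-\beta T^\theta_1}$. Using Tonelli (all integrands are non-negative since $h,\theta\geq0$) this gives
\[ \E^x[e^{-\beta T^\theta_1}h(X_{T^\theta_1})] = \E^x\!\left[\int_0^\infty e^{-\beta t}\,\theta(X_t)\,h(X_t)\,e^{-\int_0^t\theta(X_u)\,du}\,dt\right]. \]

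Next I would introduce the additive functional $A_t=\beta t+\int_0^t\theta(X_u)\,du$. By Lemma~\ref{lem:nonexplode} (guaranteed by Standing Assumption~\ref{sass:3}) the integral $\int_0^t\theta(X_u)\,du$ is finite for every $t$, so $A$ is continuous and strictly increasing, and since $A_t\geq\beta t$ with $\beta>0$ we have $A_t\uparrow\infty$; hence $A:[0,\infty)\to[0,\infty)$ is a bijection with continuous, strictly increasing inverse $\Lambda=A^{-1}$. I would then set $Y_s=X_{\Lambda_s}$ and verify, by the usual time-change argument, that $Y$ solves the stated SDE: writing $X_t=x+N_t+\int_0^t b(X_u)\,du$ with $N$ the local-martingale part, the process $N_{\Lambda_\cdot}$ is a continuous local martingale with $d\langle N_{\Lambda_\cdot}\rangle_s=a(Y_s)^2\,\Lambda_s'\,ds=\frac{a(Y_s)^2}{\beta+\theta(Y_s)}\,ds$ (using $\Lambda_s'=1/(\beta+\theta(Y_s))$), so by Dambis--Dubins--Schwarz it equals $\int_0^s \frac{a(Y_r)}{\sqrt{\beta+\theta(Y_r)}}\,dW_r$ for a Brownian motion $W$, while the drift transforms as $\int_0^{\Lambda_s}b(X_u)\,du=\int_0^s\frac{b(Y_r)}{\beta+\theta(Y_r)}\,dr$. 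Since the coefficients $\tilde a=a/\sqrt{\beta+\theta}$ and $\tilde b=b/(\beta+\theta)$ satisfy the Engelbert--Schmidt conditions (note $1/\tilde a^2=(\beta+\theta)/a^2$ and $\tilde b/\tilde a^2=b/a^2$ are locally integrable by Standing Assumptions~\ref{sass:1} and~\ref{sass:3}), this $Y$ is precisely the unique-in-law solution appearing in the lemma.

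Finally I would treat the right-hand side the same way: conditioning on $Y$ and integrating out the independent unit-rate exponential $T$ gives $\E^x[\Psi_h(Y_T)]=\E^x[\int_0^\infty e^{-s}\Psi_h(Y_s)\,ds]$, again by Tonelli. Substituting $Y_s=X_{\Lambda_s}$ and changing variables $s=A_t$, so that $ds=(\beta+\theta(X_t))\,dt$, $X_{\Lambda_s}=X_t$ and $e^{-s}=e^{-\beta t}e^{-\int_0^t\theta(X_u)\,du}$, the factor $\beta+\theta(X_t)$ cancels the denominator of $\Psi_h$ and I recover exactly the path-integral expression for the left-hand side obtained in the first step, which completes the proof.

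I expect the main obstacle to be the middle step: verifying cleanly that the time-changed process $Y_s=X_{\Lambda_s}$ is a genuine (weak) solution of the given SDE — that is, that the time-changed local martingale admits the asserted Brownian representation, and that no pathology arises at accessible boundaries, where $X$ (hence $Y$) is absorbed. There one must check that $A$ still increases to infinity (it does, via the $\beta t$ term) and that the occupation-time estimates of Lemma~\ref{lem:nonexplode} control $\int_0^t\theta(X_u)\,du$. The two conditioning/change-of-variables computations are routine given the non-negativity of the integrands.
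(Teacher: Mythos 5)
Your proposal is correct and follows essentially the same route as the paper: the paper likewise conditions on the path of $X$ to write the left-hand side as $\E^x[\int_0^\infty \theta(X_t)e^{-C_t}h(X_t)\,dt]$ with $C_t=\int_0^t(\beta+\theta(X_u))\,du$ (your $A_t$), inverts this additive functional to define $\Lambda$ and $Y_s=X_{\Lambda_s}$, identifies the SDE for $Y$ (with uniqueness in law from local integrability of $(\beta+\theta)/a^2$), and performs the same change of variables $u=C_t$ to obtain $\E^x[\int_0^\infty e^{-u}\Psi_h(Y_u)\,du]=\E^x[\Psi_h(Y_T)]$. Your additional care in verifying the time-changed SDE via Dambis--Dubins--Schwarz and the Engelbert--Schmidt conditions simply makes explicit what the paper states more briefly.
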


\begin{proof}

Let $C=(C_t)_{t \geq 0}$ be given by $C_t = \int_0^t (\beta + \theta(X^x_s)) ds$. 
Then by the local integrability assumption on $\theta/a^2$ of Standing Assumption~\ref{sass:3} we have that $C$ increases to infinity, but does not explode in finite time. 

Let $\Lambda$ be inverse to $C$. Our assumptions give us that $\Lambda_u < \infty$ for all finite $u$.
Let $Y$ be given by $Y_s = X_{\Lambda_s}$. Then $\frac{d \Lambda}{du} = \frac{1}{\beta + \theta(X_{\Lambda_u})} = \frac{1}{\beta + \theta(Y_u)}$. Moreover $Y$ is a time-homogeneous diffusion solving the SDE
\begin{equation}
\label{eq:timechange}
 dY_s = dX_{\Lambda_s}  =  a(X_{\Lambda_s}) dB_{\Lambda_s} + b(X_{\Lambda_s}) d \Lambda_s =  \frac{a(Y_s)}{ \sqrt{\beta + \theta(Y_s)} } d \tilde{B}_s + \frac{b(Y_s)}{\beta + \theta(Y_s) } ds
 \end{equation}
where $\tilde{B}$ is a Brownian motion given by $\tilde{B}_t = \int_0^t \left( \frac{ d \Lambda_s }{ds} \right)^{-1/2} dB_{\Lambda_s}$.
Note that since $(\beta + \theta)/a^2$ is locally integrable, $Y$ is unique in law.

Conversely, given $Y$ solving $dY_s = \frac{a(Y_s)}{ \sqrt{\beta + \theta(Y_s)} } d \hat{B}_s + \frac{b(Y_s)}{\beta + \theta(Y_s) } ds$ we can define $\Lambda_u = \int_0^u \frac{dv}{\beta + \theta(Y_u)}$, $C = \Lambda^{-1}$ and $X_s = Y_{C_s}$. Then $dX_s = a(X_s) dW_s + b(X_s) ds$.

Conditioning on the first event of the Poisson process we have that
\begin{eqnarray*}
\E^x \left[ e^{- \beta T^\theta_1} h(X_{T^\theta_1})  \right]
 & = & \E^x \left[ \int_0^\infty \theta(X_t) e^{- C_t} h(X_t) dt \right] \\
&  = &  \E^x \left[ \int_0^\infty \theta(X_{\Lambda_u}) e^{- u} h(X_{\Lambda_u}) d \Lambda_u \right] \\
& = &  \E^x \left[ \int_0^\infty  e^{- u} \frac{h(Y_u) \theta(Y_u)}{\beta + \theta(Y_u)} du \right]
 =  \E^x \left[ \int_0^\infty  e^{- u} \Psi_h(Y_u) du \right] = \E^x[ \Psi_h(Y_T)].
\end{eqnarray*}
\end{proof}

\begin{cor}
\label{cor:Ginc}
Suppose $\Psi$ is increasing. 
Then $G_\theta$ is increasing in $x$.
\end{cor}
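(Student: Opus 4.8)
The plan is to combine Lemma~\ref{lem:inch} with the Doeblin coupling argument already developed in the proof of Theorem~\ref{thm:simpleincreasing}. By Lemma~\ref{lem:inch}, we have the representation
\[
G_\theta(x) = \E^x\!\left[ e^{-\beta T^\theta_1} g(X_{T^\theta_1}) \right] = \E^x\!\left[ \Psi(Y_T) \right],
\]
where $Y$ is the time-changed diffusion solving $dY_s = \frac{a(Y_s)}{\sqrt{\beta + \theta(Y_s)}}\, dW_s + \frac{b(Y_s)}{\beta + \theta(Y_s)}\, ds$ and $T$ is an independent unit-rate exponential random variable. The point of this representation is that it converts the discounted Poisson-stopped expectation of $g$ into an undiscounted, fixed-horizon-type expectation of $\Psi$ against the process $Y$. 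So the question of monotonicity of $G_\theta$ in $x$ reduces to the monotonicity of $x \mapsto \E^x[\Psi(Y_T)]$, which is a standard fixed-time monotonicity question for the diffusion $Y$.

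First I would fix $x < y$ and couple two copies $Y^x$ and $Y^y$ of this diffusion (started at $x$ and $y$ respectively) using the Doeblin coupling exactly as in Theorem~\ref{thm:simpleincreasing}: run independent copies until their first meeting time $\sigma$, and let them coincide thereafter. Since $Y$ is unique in law (this is noted in the proof of Lemma~\ref{lem:inch}, because $(\beta+\theta)/a^2$ is locally integrable), this coupling preserves the law of each marginal, and by the construction we obtain $Y^x_s \leq Y^y_s$ pathwise for every $s \geq 0$. Then, because $\Psi$ is increasing by hypothesis, we have $\Psi(Y^x_s) \leq \Psi(Y^y_s)$ for all $s$, and in particular at the independent exponential time $T$. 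Taking expectations yields
\[
G_\theta(x) = \E[\Psi(Y^x_T)] \leq \E[\Psi(Y^y_T)] = G_\theta(y),
\]
so $G_\theta$ is increasing.

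The main obstacle I anticipate is justifying the Doeblin coupling for $Y$ rather than for $X$. The coupling argument in Theorem~\ref{thm:simpleincreasing} was written for the original diffusion $X$, and here it must be applied to the time-changed diffusion $Y$, whose coefficients $a/\sqrt{\beta+\theta}$ and $b/(\beta+\theta)$ differ from those of $X$. I would need to confirm that $Y$ still satisfies the regularity required for the Doeblin construction, namely that its diffusion coefficient is strictly positive on the interior and that the reciprocal of its squared diffusion coefficient is locally integrable so that pathwise uniqueness and uniqueness in law hold. This follows from Standing Assumption~\ref{sass:3}: since $(\beta+\theta)/a^2$ is locally integrable and $a > 0$, the coefficient $a^2/(\beta+\theta)$ is strictly positive and its reciprocal is locally integrable, which is precisely the Engelbert--Schmidt condition invoked for $X$. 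A secondary technical point is that at accessible boundaries $Y$ should behave compatibly with the absorbing convention; but since the time change $\Lambda$ does not introduce new boundary behaviour and $T$ is almost surely finite, the coupling and monotonicity conclusion go through without further difficulty.
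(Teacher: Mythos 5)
Your proposal is correct and follows essentially the same route as the paper: the paper's own proof likewise applies Lemma~\ref{lem:inch} with $h=g$ and then invokes the Doeblin coupling of $Y^x$ and $Y^y$ (as constructed in Theorem~\ref{thm:simpleincreasing}) to conclude $\E[\Psi(Y^x_T)] \leq \E[\Psi(Y^y_T)]$. Your extra check that $Y$ satisfies the Engelbert--Schmidt conditions (via local integrability of $(\beta+\theta)/a^2$ and $b/a^2$) is a sound elaboration of the paper's remark that $Y$ is unique in law.
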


\begin{proof}
Fix $x<y$. Let $Y^x$ and $Y^y$ denote solutions of \eqref{eq:timechange} started at $x$ and $y$ respectively. Since $Y^x$ and $Y^y$ are diffusions which are unique in law, there exists a coupling such that $Y^x \leq Y^y$ pathwise (recall Theorem~\ref{thm:simpleincreasing}).
In particular, there is a coupling such that $Y$ is increasing in its initial value on each sample path, and it follows that for $x<y$ and any increasing $\psi$, $\psi(Y^x_T) \leq \psi(Y^y_T)$. Then applying Lemma~\ref{lem:inch} with $h=g$ and $\Psi_g = \Psi$,
$G_\theta(x) =\E[ \Psi(Y^x_T) ] \leq \E[\Psi(Y^y_T)] = G_\theta(y)$.



\end{proof}


\begin{eg} Let $X$ be a diffusion in natural scale on $[0,\infty)$ or $(0,\infty)$. Let $g(x)= 1+x$ and suppose $\theta(x) = \beta/(1+2x)$, with $\theta(0)=\beta$ if $0$ is attainable. Then $\Psi=1/2$ and hence $G_\theta(x) = \frac{1}{2} \leq g$. Furthermore, applying Proposition~\ref{prop:V=G} we conclude that $\tau=T^\theta_1$ is optimal and  $V_\theta^{(\infty)}=G_\theta = \frac{1}{2}$.
\end{eg}

\begin{thm}
\label{thm:finalinc}
Suppose $\theta$ and $\Psi$ are increasing. Then $V_\theta^{(\infty)}$ is increasing.
\end{thm}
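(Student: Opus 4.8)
The plan is to prove by induction on $n$ that each $V^{(n)}_\theta$ is increasing, and then to conclude that the increasing pointwise limit $V^{(\infty)}_\theta = \lim_n V^{(n)}_\theta$ is increasing as well. For the base case, note that $V^{(1)}_\theta = G_\theta$, and since $\Psi = \Psi_g$ is increasing by hypothesis, Corollary~\ref{cor:Ginc} gives that $V^{(1)}_\theta$ is increasing (equivalently, $V^{(0)}_\theta \equiv 0$ is trivially increasing).

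For the inductive step I would assume $V^{(n-1)}_\theta$ is increasing, recalling also that it is nonnegative, and set $h_n := g \vee V^{(n-1)}_\theta$, so that the recursion reads $V^{(n)}_\theta(x) = \E^x[e^{-\beta T^\theta_1} h_n(X_{T^\theta_1})]$. Applying Lemma~\ref{lem:inch} with $h = h_n$ gives the representation
\[ V^{(n)}_\theta(x) = \E^x[\Psi_{h_n}(Y_T)], \]
where $Y$ solves \eqref{eq:timechange} and $T$ is an independent unit-rate exponential. The key observation is that, since the multiplier acts pointwise,
\[ \Psi_{h_n}(x) = \frac{(g \vee V^{(n-1)}_\theta)(x)\,\theta(x)}{\beta + \theta(x)} = \max\left\{ \Psi(x),\, \frac{V^{(n-1)}_\theta(x)\,\theta(x)}{\beta + \theta(x)} \right\} = \Psi \vee \Psi_{V^{(n-1)}_\theta}. \]
I would then verify that $\Psi_{h_n}$ is increasing: the first term $\Psi$ is increasing by hypothesis, and for the second term I use that the real map $u \mapsto \frac{u}{\beta + u}$ is increasing on $[0,\infty)$, so that $\frac{\theta}{\beta + \theta}$ is a nonnegative increasing function whenever $\theta$ is increasing; multiplying it by the nonnegative increasing function $V^{(n-1)}_\theta$ yields a nonnegative increasing product, whence $\Psi_{V^{(n-1)}_\theta}$ is increasing. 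A maximum of two increasing functions is increasing, so $\Psi_{h_n}$ is increasing.

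To pass from monotonicity of $\Psi_{h_n}$ back to monotonicity of $V^{(n)}_\theta$ I would reuse the coupling of Corollary~\ref{cor:Ginc}: for $x<y$ take the Doeblin coupling of solutions of \eqref{eq:timechange} with $Y^x \leq Y^y$ pathwise, use the same independent exponential $T$, and note $\Psi_{h_n}(Y^x_T) \leq \Psi_{h_n}(Y^y_T)$ pathwise, so that $V^{(n)}_\theta(x) \leq V^{(n)}_\theta(y)$. This closes the induction, and since $V^{(n)}_\theta \uparrow V^{(\infty)}_\theta$ the limit is a pointwise supremum of increasing functions and hence increasing.

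The main obstacle, and the one step deserving care, is the monotonicity of $\Psi_{h_n}$: the algebraic splitting $\Psi_{h_n} = \Psi \vee \Psi_{V^{(n-1)}_\theta}$ together with the fact that multiplying an increasing function by the increasing factor $\theta/(\beta+\theta)$ preserves monotonicity. This is precisely where the hypothesis ``$\theta$ increasing'' is used beyond ``$\Psi$ increasing'', and it is the feature that lets the Bellman recursion propagate monotonicity from $V^{(n-1)}_\theta$ to $V^{(n)}_\theta$. Nonnegativity of $V^{(n-1)}_\theta$ is needed here so that the product of two increasing functions is again increasing.
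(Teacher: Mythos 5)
Your proof is correct and follows essentially the same route as the paper: induction via the representation $V^{(n)}_\theta(x) = \E^x[\Psi_{g \vee V^{(n-1)}_\theta}(Y_T)]$ from Lemma~\ref{lem:inch}, the pointwise splitting $\Psi_{g \vee V^{(n-1)}_\theta} = \Psi \vee \frac{V^{(n-1)}_\theta \theta}{\beta+\theta}$, the coupling argument to transfer monotonicity of the integrand to $V^{(n)}_\theta$, and passing to the increasing limit. The only difference is that you spell out why $\frac{V^{(n-1)}_\theta \theta}{\beta+\theta}$ is increasing (monotonicity of $u \mapsto \frac{u}{\beta+u}$ plus nonnegativity of the factors), a step the paper asserts without detail.
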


\begin{proof}
By Lemma~\ref{lem:inch} 
\begin{equation}
\label{eq:Psi^V}
V_\theta^{(n+1)}(x) = \E^x \left[ e^{-\beta T^\theta_1} ( g \vee V^{(n)}_\theta )(X_{T^\theta_1}) \right] =\E^x[ \Psi_{g \vee V_\theta^{(n)}}(Y_T)]
\end{equation}
where $\Psi_{g \vee v}(y) = \frac{(g(y) \vee v(y)) \theta(y)}{\beta + \theta(y)} = \Psi(y) \vee \frac{v(y) \theta(y)}{\beta + \theta(y)}$.

If $V_\theta^{(n)}$ is increasing, then since $\theta$ and $\Psi$ are also increasing, $\Psi_{g \vee V_\theta^{(n)}}$ is increasing. Then, using \eqref{eq:Psi^V} and a coupling argument as in the proof of Theorem~\ref{thm:simpleincreasing}, $V_\theta^{(n+1)}$ is increasing. Hence, since by Corollary~\ref{cor:Ginc} we have $V^{(1)}_\theta = G_\theta$ is increasing, we have by induction that $V_\theta^{(k)}$ is increasing for each $k$. The increasing limit of increasing functions is increasing. Hence $V_\theta^{(\infty)}$ is increasing.
\end{proof}

Now we turn to the issue of convexity. Since we do not expect convexity unless $X$ is in natural scale, for the rest of this section we suppose that $X$ is in natural scale. Note that in the results that follow there are assumptions on $\Psi$, but unlike Corollary~\ref{cor:Ginc} and Theorem~\ref{thm:finalinc}, there are no separate assumptions on $\theta$.

\begin{prop}
\label{prop:Gconv} Suppose that $X$ is in natural scale.
Suppose that if $\ell = -\infty$ then $\int_{-\infty} \frac{|y|(\beta + \theta(y))}{a(y)^2} dy = \infty$ and if $r = +\infty$ then $\int^{\infty} \frac{y(\beta + \theta(y))}{a(y)^2} dy = \infty$. There is no condition at finite endpoints.

Suppose $\Psi$ is convex. Then $G_\theta$ is convex in $x$ and $G_\theta \geq \Psi$.

Alternatively, if $\Psi$ is concave then $G_\theta$ is concave and $G_\theta \leq \Psi$.
\end{prop}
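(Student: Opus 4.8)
The plan is to reduce the statement to Proposition~\ref{prop:cuteconvexity} via the time-change of Lemma~\ref{lem:inch}. Applying that lemma with $h=g$ gives $G_\theta(x) = \E^x[\Psi(Y_T)]$, where $Y$ solves \eqref{eq:timechange} and $T$ is an independent unit-rate exponential. Because $X$ is in natural scale we have $b \equiv 0$, so the drift term in \eqref{eq:timechange} vanishes and $Y$ is itself a driftless diffusion, $dY_s = \frac{a(Y_s)}{\sqrt{\beta + \theta(Y_s)}}\,dW_s$, i.e.\ a diffusion in natural scale (a local martingale) with state space $\sI$. Writing $\eta_Y(y) = a(y)/\sqrt{\beta + \theta(y)}$, I would take $c = \Psi$ and $C = G_\theta$ and verify the hypotheses of Proposition~\ref{prop:cuteconvexity} for the pair $(Y,c)$. (An alternative analytic route is to use that $C$ should solve $\tfrac12 \eta_Y^2 C'' = C - c$, which has no first-order term, so $C \ge c$ forces $C'' \ge 0$; but the probabilistic route through Proposition~\ref{prop:cuteconvexity} is self-contained given what is already proved.)

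Two hypotheses are immediate. Convexity of $\Psi$ forces continuity on the interior of $\sI$, hence $\Psi$ is bounded on compact sub-intervals, and $\Psi \ge 0$ by construction. The substantive hypothesis is $\E^y[\Psi(Y_T)] \ge \Psi(y)$, which I would obtain from Jensen's inequality: if $Y$ is a genuine martingale then $\E^y[\Psi(Y_t)] \ge \Psi(\E^y[Y_t]) = \Psi(y)$ for every $t$, and integrating against the exponential law of $T$ preserves the inequality, so $G_\theta(y) = \E^y[\Psi(Y_T)] \ge \Psi(y)$. This inequality is simultaneously the asserted bound $G_\theta \ge \Psi$ and exactly the condition $C \ge c$ demanded by Proposition~\ref{prop:cuteconvexity}. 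Finiteness of $C$ is not an issue since $G_\theta = V_\theta^{(1)} \le V_\theta < \infty$.

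The one genuine obstacle is upgrading $Y$ from a local martingale to a true martingale, for a strict local martingale would have $\E^y[Y_t] < y$ and the Jensen step could fail. This is precisely the role of the integral hypotheses. Using $1/\eta_Y(y)^2 = (\beta + \theta(y))/a(y)^2$, the conditions $\int_{-\infty} \frac{|y|(\beta + \theta(y))}{a(y)^2}\,dy = \infty$ and $\int^{\infty} \frac{y(\beta + \theta(y))}{a(y)^2}\,dy = \infty$ are the standard (Kotani-type) necessary and sufficient conditions for the driftless one-dimensional diffusion $dY = \eta_Y(Y)\,dW$ to be a martingale rather than a strict local martingale at an infinite endpoint. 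I would establish the martingale property by localising, stopping $Y$ at the exit times $H_n$ of $(-n,n)$ so that the stopped process is a bounded martingale, and then letting $n \uparrow \infty$, the integral conditions guaranteeing that no mass escapes to $\pm\infty$. At a finite accessible endpoint $Y$ is absorbed by Standing Assumption~\ref{sass:2}, which is compatible with the martingale property, so no extra condition is needed there, consistent with the statement.

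With the martingale property in hand, Proposition~\ref{prop:cuteconvexity} applied to $(Y,\Psi)$ yields that $C = G_\theta$ is convex, completing the convex case together with the bound $G_\theta \ge \Psi$ noted above. For the concave case I would reverse every step: if $\Psi$ is concave then Jensen gives $\E^y[\Psi(Y_t)] \le \Psi(y)$, hence $G_\theta \le \Psi$, and the proof of Proposition~\ref{prop:cuteconvexity} carries over verbatim with all inequalities reversed, producing the opposite two-sided bound $C(y) \ge C(x)\frac{z-y}{z-x} + C(z)\frac{y-x}{z-x}$ and thus concavity of $G_\theta$.
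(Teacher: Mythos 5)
Your proposal is correct, and it shares the paper's skeleton: the representation $G_\theta(x) = \E^x[\Psi(Y_T)]$ from Lemma~\ref{lem:inch}, plus the true-martingale property of the time-changed process $Y$, which both you and the paper obtain from the boundary integral conditions via Kotani's theorem. Where you genuinely diverge is in the convexity engine. The paper applies Theorem~\ref{thm:hobson} (the three-process coupling for martingale diffusions) to $Y$ with $\beta = 0$, using convexity of $\Psi$ directly, and handles concavity by Remark~\ref{rem:concave}; you instead verify, via Jensen and the martingale property, the mean-dominance condition $\E^y[\Psi(Y_T)] \geq \Psi(y)$ and then invoke Proposition~\ref{prop:cuteconvexity} with $c = \Psi$, $C = G_\theta$. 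Your route buys two things: it makes the asserted bound $G_\theta \geq \Psi$ an explicit intermediate step (the paper's two-line proof leaves this to the reader, though it follows from the same Jensen argument), and it uses only the weaker hypothesis $C \geq c$ rather than convexity of the payoff --- indeed it is precisely the $n=0$ instance of the argument the paper deploys for $V^{(n+1)}_\theta$ in Theorem~\ref{thm:Vconv}, so it unifies the treatment of $G_\theta$ and of the iterates; the paper's route is simply shorter given that Theorem~\ref{thm:hobson} is already available. One caution: your localisation sketch (stop $Y$ at the exit times of $(-n,n)$ and let $n \uparrow \infty$) is not by itself a proof that the integral conditions prevent mass escaping to infinity --- that implication is exactly the nontrivial content of Kotani's result, so it should be cited rather than re-derived, as the paper does; since you name the Kotani conditions explicitly, this is a presentational point rather than a gap. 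Likewise your reversal of all inequalities in Proposition~\ref{prop:cuteconvexity} for the concave case is routine and valid, being the exact analogue of Remark~\ref{rem:concave}.
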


\begin{proof}
By a result of Kotani~\cite{Kotani:06} the conditions at the boundaries are exactly sufficient to guarantee that $Y$ given by $dY_s = dX_{\Lambda_s}=\frac{a(Y_s)}{\sqrt{\beta + \theta(Y_s)}} dB_s$ is a martingale. The result then follows from the representation in \eqref{eq:Psih} and Theorem~\ref{thm:hobson} (or Remark~\ref{rem:concave} in the case of concavity) with $\beta=0$.
\end{proof}

\begin{rem} The martingale property is essential here, and it is easy to construct a counterexample in the strict local martingale case using a linear payoff and a three-dimensional Bessel process.
\end{rem}

\begin{thm}
\label{thm:Vconv} Suppose that $X$ is in natural scale.
Suppose that if $\ell = -\infty$ then $\int_{-\infty} \frac{|y|(\beta + \theta(y))}{a(y)^2} dy = \infty$ and similarly if $r = +\infty$ then $\int^{\infty} \frac{y(\beta + \theta(y))}{a(y)^2} dy = \infty$. There is no condition at finite endpoints.


Suppose $\Psi$ is convex. Then $V^{(\infty)}_\theta$ is convex.

Suppose $\Psi$ is concave. Then $V^{(\infty)}_\theta$ is concave.
\end{thm}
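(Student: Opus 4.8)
The plan is to handle the convex and concave cases separately, in each case reducing to facts already established for $G_\theta$ together with Proposition~\ref{prop:cuteconvexity}. A preliminary observation used in both cases: the boundary conditions in the statement are exactly Kotani's conditions, so, just as in the proof of Proposition~\ref{prop:Gconv}, the time-changed process $Y$ of Lemma~\ref{lem:inch}, which in natural scale solves $dY_s = \frac{a(Y_s)}{\sqrt{\beta+\theta(Y_s)}}\,dB_s$, is a genuine regular martingale diffusion on $\sI$. Thus Proposition~\ref{prop:cuteconvexity} is available with this $Y$ and an independent unit-rate exponential $T$.

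For the convex case I would apply Proposition~\ref{prop:cuteconvexity} directly to the fixed-point representation of $V^{(\infty)}_\theta$. Starting from \eqref{eq:mono} and applying Lemma~\ref{lem:inch} with $h = g \vee V^{(\infty)}_\theta$ gives $V^{(\infty)}_\theta(x) = \E^x[c(Y_T)]$ where $c := \Psi_{g \vee V^{(\infty)}_\theta} = \Psi \vee \frac{V^{(\infty)}_\theta\, \theta}{\beta+\theta}$. Writing $C = V^{(\infty)}_\theta$, I then verify the hypotheses of Proposition~\ref{prop:cuteconvexity}: $c \geq 0$ is immediate; $c$ is bounded on compact subintervals because $c \leq \Psi \vee w$, with $\Psi$ locally bounded (being convex) and $w$ finite; and the required inequality $\E^y[c(Y_T)] \geq c(y)$, i.e. $C \geq c$, I would argue pointwise in two branches. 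Where $V^{(\infty)}_\theta(y) \geq g(y)$, one has $c(y) = \frac{V^{(\infty)}_\theta(y)\theta(y)}{\beta+\theta(y)} \leq V^{(\infty)}_\theta(y) = C(y)$ since $\theta/(\beta+\theta) \leq 1$. Where $V^{(\infty)}_\theta(y) < g(y)$, one has $c(y) = \Psi(y)$, and here I invoke the convexity-specific conclusion $G_\theta \geq \Psi$ of Proposition~\ref{prop:Gconv}, combined with $V^{(\infty)}_\theta \geq V^{(1)}_\theta = G_\theta$, to get $C(y) = V^{(\infty)}_\theta(y) \geq G_\theta(y) \geq \Psi(y) = c(y)$. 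With $C \geq c$ established, Proposition~\ref{prop:cuteconvexity} gives that $V^{(\infty)}_\theta = C$ is convex.

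The concave case is simpler and does not invoke Proposition~\ref{prop:cuteconvexity} at all. By the concave half of Proposition~\ref{prop:Gconv}, $G_\theta$ is concave and $G_\theta \leq \Psi$. Since $\Psi = \frac{g\theta}{\beta+\theta} \leq g$ (because $\beta>0$ and $\theta \geq 0$ force $\theta/(\beta+\theta) < 1$, while $g \geq 0$), it follows that $G_\theta \leq g$. Proposition~\ref{prop:V=G} then collapses the iteration entirely, giving $V^{(\infty)}_\theta = G_\theta$, which is concave.

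I expect the main obstacle to be the verification of $C \geq c$ in the convex case. This is the only place where convexity, as opposed to mere monotonicity, is genuinely used: it rests on the inequality $G_\theta \geq \Psi$, and one must correctly combine this with the fixed-point structure across the two branches of the maximum $g \vee V^{(\infty)}_\theta$. The concave case, by contrast, is essentially a corollary of $\Psi \leq g$ and Proposition~\ref{prop:V=G}.
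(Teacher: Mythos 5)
Your proposal is correct, and your convex case takes a genuinely different route from the paper's, while your concave case coincides with the paper's argument (the paper collapses the iteration via $G_\theta \leq \Psi \leq g$ exactly as you do, only without naming Proposition~\ref{prop:V=G}). For convexity the paper never works at the fixed point: it runs an induction on $n$, using \eqref{eq:Psi^V} to write $V^{(n+1)}_\theta(y)=\E^y[\Psi_{g\vee V^{(n)}_\theta}(Y_T)]$, verifies $V^{(n+1)}_\theta \geq \Psi_{g\vee V^{(n)}_\theta}$ by the same two ingredients you use ($\theta/(\beta+\theta)\leq 1$ on one branch, $G_\theta\geq\Psi$ from Proposition~\ref{prop:Gconv} on the other), applies Proposition~\ref{prop:cuteconvexity} at every step to conclude that each $V^{(n)}_\theta$ is convex, and finishes with the fact that an increasing limit of convex functions is convex. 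You instead apply Proposition~\ref{prop:cuteconvexity} exactly once, to the fixed-point identity $V^{(\infty)}_\theta(x)=\E^x[\Psi_{g\vee V^{(\infty)}_\theta}(Y_T)]$ obtained from \eqref{eq:mono} and Lemma~\ref{lem:inch}; your two-branch verification of $C\geq c$ is the paper's computation shifted to the limit. Your route is shorter and isolates the fixed-point structure; the paper's route yields the stronger intermediate conclusion that every finite-horizon value $V^{(n)}_\theta$ is convex.

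One technical point where your shortcut needs more care than the induction: the hypothesis of Proposition~\ref{prop:cuteconvexity} that $c$ be bounded on compact subintervals. In the paper's induction this is self-sustaining, since $c=\Psi_{g\vee V^{(n)}_\theta}\leq \Psi\vee V^{(n)}_\theta$ and $V^{(n)}_\theta$ is convex (hence locally bounded) by the previous step. You need local boundedness of $V^{(\infty)}_\theta$ a priori, and your justification --- $c\leq\Psi\vee w$ with ``$w$ finite'' --- is loose: pointwise finiteness of $w$ does not by itself give boundedness on compacts, and $g$ is not assumed locally bounded, so there is no immediate strong-Markov bound either. The gap is minor and is covered by the paper's well-posedness assumption together with standard diffusion theory (a finite value function $w$ of a regular one-dimensional diffusion is the least $\beta$-excessive majorant of $g$, and finite excessive functions of regular diffusions are continuous, hence locally bounded); stating this, or simply adding local boundedness of $w$ to what ``well-posed'' means, closes it. Appealing instead to convexity of the $V^{(n)}_\theta$ would be circular, as it would re-import the paper's induction that your argument is designed to avoid.
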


\begin{proof}
Suppose the conditions of the theorem hold and $\Psi$ is convex. By Proposition~\ref{prop:Gconv}, $V^{(1)}_\theta \equiv G_\theta \geq \Psi$.

Suppose inductively that $V^{n}_\theta \geq V^{(n-1)}_\theta \geq \ldots \geq V^{(1)}_\theta =  G_\theta \geq \Psi$.

Consider $V^{(n+1)}_\theta(y)$. By  \eqref{eq:Psi^V} we have $V^{(n+1)}_\theta(y)= \E^y[\Psi_{g \vee V^{(n)}_\theta}(Y_T)]$.
Since $V_\theta^{(n)} \geq V_\theta^{(n-1)}$ it follows that $V^{(n+1)}_\theta \geq V^{(n)}_\theta \geq  \ldots \geq V^{(1)}_\theta = G_\theta \geq \Psi(y)$. Moreover, $V_\theta^{(n+1)} \geq V_\theta^{(n)} \geq \frac{ V_\theta^{(n)}(y)\theta(y)}{\beta + \theta(y)}$.
In particular, $V^{(n+1)}_\theta \geq \Psi \vee \frac{ V_\theta^{(n)} \theta}{\beta + \theta} = \Psi_{g \vee V^{(n)}_\theta}$. Thus,
$V^{(n+1)}_\theta(y) = \E^y[\Psi_{g \vee V^{(n)}_\theta} (Y_T)] \geq \Psi_{g \vee V^{(n)}_\theta}(y)$, and by Proposition~\ref{prop:cuteconvexity} with $c = \Psi_{g \vee V^{(n)}_\theta}$ we conclude that $V^{(n+1)}_\theta$ is convex.



Finally, since the increasing limit of convex functions is convex we conclude that $V^{(\infty)}_\theta$ is convex.

The corresponding result for concavity is more direct: if $\Psi$ is concave then $G_\theta \leq \Psi = \frac{g \theta}{\beta+\theta} \leq g$. Then $V^{(n)}_\theta= V^{(1)}_\theta= G_\theta$ and $V^{(\infty)}_\theta = G_\theta$. Since $G_\theta$ is concave by Proposition~\ref{prop:Gconv}, the result follows.

\end{proof}

\section{Monotonicity and convexity of $V_\theta$}
\label{sec:Vinfty}
Standing Assumptions~\ref{sass:1}, \ref{sass:2} and \ref{sass:3} remain in force.

\begin{prop}
\label{prop:unique}
Suppose
\begin{equation}
\label{eq:gcondition}
\E \left[ \sup_{s \geq 0} \left\{ e^{-\beta s}g(X_s) \right\} \right] < \infty
\hspace{10mm} \mbox{and} \hspace{10mm}
\lim_{t \uparrow \infty} \E \left[ \sup_{s \geq t} \left\{ e^{-\beta s}g(X_s) \right\} \right] = 0.
\end{equation}

Then $V_\theta(x) = V^{(\infty)}_\theta(x)$.
\end{prop}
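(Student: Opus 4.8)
The plan is to prove the two inequalities $V_\theta^{(\infty)} \leq V_\theta$ and $V_\theta \leq V_\theta^{(\infty)}$ separately. The first is essentially automatic: since $V_\theta^{(n)}$ is a supremum over the restricted class of stopping times valued in the first $n$ events of $N^\theta$, and this class is contained in $\sT(\T^\theta)$, we have $V_\theta^{(n)} \leq V_\theta$ for every $n$, and hence $V_\theta^{(\infty)} = \lim_n V_\theta^{(n)} \leq V_\theta$. The real content is the reverse inequality, and this is where the growth condition \eqref{eq:gcondition} must be used; Example~\ref{eg:nonequality} shows it cannot be dispensed with.

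\medskip

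To establish $V_\theta \leq V_\theta^{(\infty)}$, first I would fix an arbitrary $\tau \in \sT(\T^\theta)$ and approximate it by truncation. The natural candidate is $\tau_n := \tau$ on the event $\{\tau \leq T^\theta_n\}$ (i.e.\ $\tau$ is one of the first $n$ event times), and $\tau_n := $ "never stop" (formally $+\infty$, contributing zero to the payoff since $g \geq 0$ and the discount factor kills the value) on $\{\tau > T^\theta_n\}$. Then $\tau_n$ is a stopping time valued in $\{T^\theta_1, \ldots, T^\theta_n\}$, so
\[
\E^x\!\left[ e^{-\beta \tau_n} g(X_{\tau_n}) \right] \leq V_\theta^{(n)}(x) \leq V_\theta^{(\infty)}(x).
\]
The goal is then to pass to the limit and recover $\E^x[e^{-\beta \tau} g(X_\tau)]$ on the left, for which I would write
\[
\E^x\!\left[ e^{-\beta \tau} g(X_\tau) \right] - \E^x\!\left[ e^{-\beta \tau_n} g(X_{\tau_n}) \right] = \E^x\!\left[ e^{-\beta \tau} g(X_\tau)\, I_{\{\tau > T^\theta_n\}} \right],
\]
and bound the right-hand side by $\E^x\!\left[ \sup_{s \geq T^\theta_n} e^{-\beta s} g(X_s) \right]$.

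\medskip

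The crucial step is now to show this error term vanishes as $n \uparrow \infty$. Here Lemma~\ref{lem:nonexplode} (guaranteed by Standing Assumption~\ref{sass:3}) is exactly what is needed: it gives $T^\theta_n \uparrow \infty$ almost surely, so on the event $\{\tau > T^\theta_n\}$ the quantity $\sup_{s \geq T^\theta_n} e^{-\beta s} g(X_s)$ is dominated by $\sup_{s \geq t} e^{-\beta s} g(X_s)$ for any fixed $t$ eventually. Combining the almost-sure convergence $T^\theta_n \to \infty$ with the dominating random variable $\sup_{s \geq 0} e^{-\beta s} g(X_s)$, which is integrable by the first condition in \eqref{eq:gcondition}, I would apply dominated convergence; the second condition in \eqref{eq:gcondition} ensures that the limiting error is zero. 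Thus $\E^x[e^{-\beta \tau} g(X_\tau)] \leq V_\theta^{(\infty)}(x)$, and taking the supremum over $\tau \in \sT(\T^\theta)$ yields $V_\theta \leq V_\theta^{(\infty)}$, completing the proof.

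\medskip

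The main obstacle I anticipate is handling the interaction between the (possibly infinitely many) event times and the truncation cleanly. One must be careful that $\tau_n$ is genuinely valued in the first $n$ events and is a legitimate stopping time, and that the "never stop" alternative on $\{\tau > T^\theta_n\}$ is correctly interpreted so that it contributes nothing to the expected payoff. The interchange of limit and expectation is the technical heart: the two parts of \eqref{eq:gcondition} are precisely calibrated so that a uniform integrability / dominated convergence argument goes through, and the role of $T^\theta_n \uparrow \infty$ from Lemma~\ref{lem:nonexplode} is to convert the tail control in time into tail control over the truncation index $n$.
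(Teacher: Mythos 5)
Your proof is correct and takes essentially the same route as the paper: both split the payoff of an arbitrary $\tau \in \sT(\T^\theta)$ on $\{\tau \leq T^\theta_n\}$ versus $\{\tau > T^\theta_n\}$, note that the first piece is dominated by $V^{(n)}_\theta(x)$, and kill the second piece using $T^\theta_n \uparrow \infty$ from Lemma~\ref{lem:nonexplode} together with both parts of \eqref{eq:gcondition}. The only divergence is technical: you handle the error term by dominated convergence applied to the decreasing sequence $\sup_{s \geq T^\theta_n} e^{-\beta s} g(X_s)$, while the paper fixes a deterministic $t_0$ and splits further on $\{T^\theta_n \leq t_0\}$; your variant is, if anything, slightly cleaner, since the paper's estimate $\E^x[ I_{\{T^\theta_n \leq t_0\}} \sup_{s \geq 0} e^{-\beta s} g(X_s)] \leq \Prob^x(T^\theta_n \leq t_0)\, \E^x[\sup_{s \geq 0} e^{-\beta s} g(X_s)]$ implicitly treats the indicator and the supremum as independent, whereas your argument needs no such step.
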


\begin{rem}
From the discussion at the end of Example~\ref{eg:DW} we know that \eqref{eq:gcondition} holds in that setting, and clearly it also holds whenever $g$ is bounded. Indeed it holds for all the examples in Section~\ref{sec:eg}.
\end{rem}

\begin{proof}
Let $K = \E \left[ \sup_{s \geq 0} \left\{ e^{-\beta s}g(X_s) \right\} \right]$. Given $\epsilon>0$, choose $t_0$ such that
$\E \left[ \sup_{s \geq t_0} \left\{ e^{-\beta s}g(X_s) \right\} \right] < \epsilon/2$, and, recalling that by Lemma~\ref{lem:nonexplode} we have that $T_n^\theta \uparrow \infty$ almost surely, choose $n_0$ such that $\Prob(T^\theta_{n_0} \leq t_0) < \frac{\epsilon}{2K}$.

Then, for any stopping time $\tau \in \sT(\T_\theta)$ and $n \geq n_0$,
\begin{eqnarray*}
\E^x \left[ I_{ \{ \tau > T^\theta_{n} \} } e^{-\beta \tau} g(X_\tau) \right]
& = & \E^x \left[ I_{ \{ \tau > T^\theta_{n} > t_0 \} } e^{-\beta \tau} g(X_\tau) \right] + \E^x \left[ I_{ \{ \tau > T^\theta_{n} \} } I_{ \{ t_0 \geq T^\theta_{n} \} } e^{-\beta \tau} g(X_\tau) \right] \\
& \leq & \E^x \left[ I_{ \{ \tau > t_0 \} } e^{-\beta \tau} g(X_\tau) \right] + \E^x\left[ I_{ \{ t_0 \geq T^\theta_{n} \} } \sup_{s \geq 0} \left\{ e^{-\beta s} g(X_s) \right\} \right] \\
& \leq & \E^x \left[ \sup_{s \geq t_0}  \left\{ e^{-\beta s} g(X_s) \right\} \right] + \Prob^x( T^\theta_{n} \leq t_0 ) \E^x \left[ \sup_{s \geq 0} \left\{ e^{-\beta s} g(X_s) \right\} \right] \\
& < & \frac{\epsilon}{2} + \frac{\epsilon}{2K}{K} = \epsilon.
\end{eqnarray*}
It follows that
\begin{eqnarray*}
\sup_{\tau \in \sT(\T_\theta)} \E^x\left[ e^{-\beta \tau} g(X_\tau) \right]
& = & \sup_{\tau \in \sT(\T_\theta)}\left\{  \E^x\left[ e^{-\beta \tau} g(X_\tau) I_{ \{ \tau \leq T_{n} \} } \right] +
  \E^x\left[ e^{-\beta \tau} g(X_\tau) I_{ \{ \tau > T_{n} \} } \right] \right\} \\
& \leq &  \sup_{\tau \in \sT( \{ T^\theta_1, \ldots T^\theta_{n})} \E^x\left[ e^{-\beta \tau} g(X_\tau) \right] + \epsilon \\
& = & V^{(n)}_\theta(x) + \epsilon
\end{eqnarray*}
Hence, for large enough $n$, $V^{(n)}_\theta(x) \leq V_\theta(x) \leq V^{(n)}_\theta(x) + \epsilon$. Taking limits we find $V_\theta = V^{(\infty)}_\theta$.
\end{proof}

Combining Proposition~\ref{prop:unique} and Theorem~\ref{thm:finalinc} we obtain:
\begin{cor}
\label{cor:Vinc}
Suppose $\theta$ and $\Psi$ are increasing and that \eqref{eq:gcondition} holds. Then $V_\theta$ is increasing.
\end{cor}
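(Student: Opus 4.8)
The plan is to observe that the two hypotheses of the corollary are precisely the hypotheses needed to invoke, separately, the two results just established, and that these combine without any additional work. First I would note that the assumptions ``$\theta$ and $\Psi$ are increasing'' are exactly the hypotheses of Theorem~\ref{thm:finalinc}, which yields that the limiting value function $V_\theta^{(\infty)}$ is increasing in $x$. Separately, the growth condition \eqref{eq:gcondition} is exactly the hypothesis of Proposition~\ref{prop:unique}, which gives the identity $V_\theta(x) = V_\theta^{(\infty)}(x)$ for all $x \in \sI$.

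Having isolated these two inputs, the remaining step is purely formal: since $V_\theta$ coincides pointwise with $V_\theta^{(\infty)}$, and the latter is increasing, $V_\theta$ inherits monotonicity immediately. Concretely, for $x < y$ one writes $V_\theta(x) = V_\theta^{(\infty)}(x) \leq V_\theta^{(\infty)}(y) = V_\theta(y)$, where the two equalities are Proposition~\ref{prop:unique} and the inequality is Theorem~\ref{thm:finalinc}.

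There is no genuine obstacle in this argument; the corollary is a direct amalgamation of the preceding two results, and all the analytic content has already been discharged in their proofs. The only point worth flagging is that one must verify that the standing hypotheses of the corollary are sufficient to guarantee $T^\theta_n \uparrow \infty$ almost surely, which is needed inside the proof of Proposition~\ref{prop:unique}; but this is already secured under Standing Assumption~\ref{sass:3} via Lemma~\ref{lem:nonexplode}, which is in force throughout this section. Hence the combination is legitimate and the conclusion follows.
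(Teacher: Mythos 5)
Your proof is correct and is exactly the paper's own argument: the corollary is presented there as the direct combination of Proposition~\ref{prop:unique} (giving $V_\theta = V^{(\infty)}_\theta$ under \eqref{eq:gcondition}) with Theorem~\ref{thm:finalinc} (giving monotonicity of $V^{(\infty)}_\theta$ when $\theta$ and $\Psi$ are increasing). Your closing remark that $T^\theta_n \uparrow \infty$ is guaranteed by Standing Assumption~\ref{sass:3} via Lemma~\ref{lem:nonexplode} is a correct and worthwhile check of the hypotheses used inside Proposition~\ref{prop:unique}.
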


Combining Theorem~\ref{thm:Vconv} and Proposition~\ref{prop:unique} we obtain the corresponding result for $V_\theta$:

\begin{cor}
\label{cor:Vconv} Suppose that $X$ is in natural scale.
Suppose that if $\ell = -\infty$ then $\int_{-\infty} \frac{|y|(\beta + \theta(y))}{a(y)^2} dy = \infty$ and similarly if $r = +\infty$ then $\int^{\infty} \frac{y(\beta + \theta(y))}{a(y)^2} dy = \infty$. There is no condition at finite endpoints. Suppose that \eqref{eq:gcondition} holds.

Suppose $\Psi$ is convex. Then $V_\theta$ is convex.

Suppose $\Psi$ is concave. Then $V_\theta$ is concave.
\end{cor}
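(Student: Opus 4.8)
The plan is to recognize that this corollary is simply the bridge from the iterated value function $V^{(\infty)}_\theta$, whose shape has already been controlled, back to the true value function $V_\theta$. The entire architecture of the paper has been to study $V^{(\infty)}_\theta$ because its successive approximations $V^{(n)}_\theta$ admit the time-change representation of Lemma~\ref{lem:inch} and hence propagate convexity (or concavity) through the iteration~\eqref{eq:Psi^V}; what remains is to transfer those properties to $V_\theta$ itself, and the only tool needed for that transfer is the identification $V_\theta = V^{(\infty)}_\theta$.

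Concretely, I would proceed in two steps. First, invoke Proposition~\ref{prop:unique}: the growth condition~\eqref{eq:gcondition} is among the hypotheses, so $V_\theta(x) = V^{(\infty)}_\theta(x)$ for every $x \in \sI$. Second, observe that the remaining hypotheses of the corollary --- that $X$ is in natural scale, the integral divergence conditions at any infinite endpoint, and convexity (respectively concavity) of $\Psi$ --- are exactly the hypotheses of Theorem~\ref{thm:Vconv}. That theorem then yields that $V^{(\infty)}_\theta$ is convex (respectively concave), and substituting the identification from the first step shows that $V_\theta$ inherits the same shape.

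There is essentially no obstacle here beyond bookkeeping: one need only verify that the union of the hypotheses of Proposition~\ref{prop:unique} and Theorem~\ref{thm:Vconv} coincides with the hypotheses stated in the corollary, which is immediate by inspection (the natural-scale and boundary conditions come from Theorem~\ref{thm:Vconv}, the growth condition~\eqref{eq:gcondition} from Proposition~\ref{prop:unique}, and convexity or concavity of $\Psi$ is common to both). I would note in passing that the concavity case is even cleaner, since in the proof of Theorem~\ref{thm:Vconv} concavity of $\Psi$ already forces $V^{(n)}_\theta = G_\theta$ for all $n$ and hence $V^{(\infty)}_\theta = G_\theta$; but this refinement is unnecessary, as the combination of the two cited results handles both the convex and concave cases uniformly.
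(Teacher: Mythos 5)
Your proof is correct and is exactly the paper's own argument: the paper introduces Corollary~\ref{cor:Vconv} with the phrase ``Combining Theorem~\ref{thm:Vconv} and Proposition~\ref{prop:unique} we obtain the corresponding result for $V_\theta$,'' which is precisely your two-step transfer via the identification $V_\theta = V^{(\infty)}_\theta$. Nothing further is needed.
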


\end{document}